\title[Isotropy formality of $(\mathbb{Z}_2\oplus \mathbb{Z}_2)$-symmetric spaces]{Equivariant formality of the isotropy action on $(\mathbb{Z}_2\oplus \mathbb{Z}_2)$-symmetric spaces}
\def\titl{Equivariant formality of the isotropy action on $(\mathbb{Z}_2\oplus \mathbb{Z}_2)$-symmetric spaces}
\def\auth{Manuel Amann and Andreas Kollross}
\date{February 18th, 2020}
\subjclass[2010]{57T15, 55N91, 57S15  (Primary), 55P62 (Secondary)}
\keywords{\noindent generalized symmetric spaces, $(\zz_2\oplus\zz_2)$-symmetric spaces, formality, equivariant formality}
\author{\auth}
\newtheorem{theo}{Theorem}[section]
\newtheorem{main}{Theorem}
\newtheorem*{main*}{Theorem}
\newtheorem*{mainprop*}{Proposition}
\newtheorem{mainconj}{Conjecture}
\newtheorem{prop}[theo]{Proposition}
\newtheorem{defi2}[theo]{Definition}
\newtheorem*{defi2*}{Definition}
\newenvironment{defi*}{\begin{defi2*}\normalfont}{\end{defi2*}}
\newenvironment{defin*}[1]{\begin{defi2*}[#1]\normalfont}{\end{defi2*}}
\newtheorem*{rem2*}{Remark}
\newenvironment{rem*}{\begin{rem2*}\normalfont}{\hfill$\boxbox$\end{rem2*}}
\newtheorem{rem2}[theo]{Remark}
\newenvironment{rem}{\begin{rem2}\normalfont}{\hfill$\boxbox$\end{rem2}}
\newtheorem{lemma}[theo]{Lemma}
\newtheorem{cor}[theo]{Corollary}
\newtheorem*{cor*}{Corollary}
\newtheorem{ques}[theo]{Question}
\newtheorem*{conj*}{Conjecture}
\newtheorem*{theo*}{Theorem}
\newtheorem*{ques*}{Question}
\newtheorem*{mi2}{Main Idea}
\newtheorem{ex2}[theo]{Example}
\newtheorem{exer2}[theo]{Exercise}
\newtheorem{alg2}[theo]{Algorithm}
\newcommand{\hh}{{\mathbb{H}}}                                     
\newcommand{\nn}{{\mathbb{N}}}                                     
\newcommand{\qq}{{\mathbb{Q}}}                                     
\newcommand{\rr}{{\mathbb{R}}}                                     
\newcommand{\pp}{{\mathbf{P}}}                                     
\newcommand{\zz}{{\mathbb{Z}}}                                     
\newcommand{\SO}{{\mathbf{SO}}}                                    
\newcommand{\U}{{\mathbf{U}}}                                      
\newcommand{\SU}{{\mathbf{SU}}}                                    
\newcommand{\Sp}{{\mathbf{Sp}}}                                    
\newcommand{\spl}{{\mathfrak{sp}}}                                 
\newcommand{\B}{{\mathbf{B}}}                                      
\newcommand{\C}{{\mathbf{C}}}                                      
\newcommand{\E}{{\mathbf{E}}}                                      
\newcommand{\F}{{\mathbf{F}}}                                      
\newcommand{\G}{{\mathbf{G}}}                                      
\newcommand{\Spin}{{\mathbf{Spin}}}                                
\newcommand{\dif} {{\operatorname{d}}}                             
\newcommand{\In} {{\,\subseteq\,}}                                 
\newcommand{\Ni} {\,{\supseteq}\,}                                 
\newcommand{\im} {{\operatorname{im\,}}}                           
\newcommand{\Hom}{{\operatorname{Hom}}}                            
\newcommand{\APL}{{\operatorname{A_{PL}}}}                         
\newcommand{\ADR}{{\operatorname{A_{DR}}}}                         
\newcommand{\rk}{{\operatorname{rk\,}}}                            
\newcommand{\diag}{{\operatorname{diag}}}                          
\newcommand{\co}{\colon\thinspace}                                 
\newcommand{\comment}[1]{}                                         
\newcommand{\xto}[1]{\xrightarrow{#1}}                             
\newcommand{\hto}[1]{\overset{#1}{\hookrightarrow}}                
\newcommand{\case}[1]{\textbf{Case #1.}}                           
\newcommand{\ack}{\noindent\textbf{Acknowledgements. }}            
\newcommand{\str}{\noindent\textbf{Structure of the article. }}    
\newcommand{\odd}{\textrm{odd}}                                    
\newcommand{\even}{\textrm{even}}                                  
\newenvironment{prf}{\begin{proof}[\textsc{Proof}]} {\end{proof}}     
\begin{document}

\maketitle \thispagestyle{empty}


\begin{abstract}
Compact symmetric spaces are probably one of the most prominent class of \emph{formal} spaces, i.e., of spaces where the rational homotopy type is a formal consequence of the rational cohomology algebra. As a generalisation, it is even known that their isotropy action is equivariantly formal.

In this article we show that $(\zz_2\oplus \zz_2)$-symmetric spaces are equivariantly formal and formal in the sense of Sullivan, in particular. Moreover, we give a  short alternative proof of equivariant formality in the case of symmetric spaces with our new approach.
\end{abstract}


\section*{Introduction}

An important notion in rational homotopy theory is the concept of \emph{formality}, which, roughly speaking, expresses the property of a space that its rational homotopy type can be formally derived from its rational cohomology algebra; that is, all rational information is contained already in the rational cohomology algebra. In particular, obstructions to formality like Massey products vanish. There are several prominent classes of formal manifolds: compact K\"ahler manifolds, symmetric spaces, and  homogeneous spaces of equal rank, just to mention a few. (Nonetheless, even amongst homogeneous spaces the lack of formality is generic---see \cite{Ama13} for several classes of non-formal examples.)

In this article we are interested in certain homogeneous spaces $G/K$ of compact connected Lie groups. An a priori unrelated concept is \emph{equivariant formality} of an action of a compact Lie group $K$ on a manifold $M$, which states that the \emph{Borel fibration}
\begin{align*}
M\hto{} M\times_K \E K \to \B K
\end{align*}
is \emph{totally non-homologous to zero}, i.e., the map induced by the fiber inclusion $H^*(M \times _K \E K;\qq)\to H^*(M;\qq)$ is a surjection.

In the special case of the \emph{isotropy action} of $K$ on $G/K$, i.e., the action given by left multiplication of the isotropy group $K$ on $G/K$, there is a well-known characterisation (see \cite[Theorem~1.4]{CF16}) which yields that equivariant formality of $K\curvearrowright G/K$ implies the formality of $G/K$. Accordingly, several classes of isotropy actions are known to be equivariantly formal, including the ones on symmetric spaces or, more generally, on $k$-symmetric spaces (see \cite{Goe12}, \cite{GN16}).

In this note we aim to find further classes of homogeneous spaces with equivariantly formal isotropy actions and to pave a way towards an answer to
\begin{ques}\label{ques01}
Let $G$ be a compact connected Lie group and let $\sigma$ be an (abelian) (Lie) group of automorphisms of~$G$. Does it hold true that the isotropy action on $G/G^\sigma_0$, where $G^\sigma_0$ denotes the identity component of the fixed point set of $\sigma$, is equivariantly formal?
\end{ques}
(Clearly, one may ask this question first for the formality of~$G/G_0^\sigma$.)

In this article we prove that a compact $(\zz_2\oplus \zz_2)$-symmetric space is equivariantly formal:
\begin{main}\label{mainthm}
Let $G$ be a compact connected Lie group and let $\sigma$ be a group of automorphisms of~$G$ isomorphic to~$\zz_2\oplus \zz_2$. Then the isotropy action on~$G/G^\sigma_0$ is equivariantly formal.
\end{main}

The proof proceeds by stepping through the classification of these spaces and first proving their formality. We then provide a sufficient criterion for the isotropy action on a formal homogeneous space to be equivariantly formal. Finally, it only remains to realise that the arguments we provided for formality are actually strong enough to already yield equivariant formality in the light of this new criterion.

We remark that our main result has recently been proven by Sam Hagh Shenas Noshari~\cite{Nos18}. His approach, however, is different from ours, directly proving equivariant formality of the isotropy action without drawing on the classification of the latter spaces. We hope, however, that our approach can be seen as a general approach which is useful in many similar situations. We illustrate this by quickly reproving the equivariant formality of the isotropy action on symmetric spaces.

As a consequence of Theorem~\ref{mainthm}, we have the following.
\begin{cor}
Question~\ref{ques01} can be answered in the affirmative whenever $\sigma$ is abelian and $|\sigma|\le7$.
\end{cor}
Indeed, the conjecture is trivial for the trivial group. For $|\sigma|=2$, it exactly states the well-known equivariant formality of the isotropy action on symmetric spaces~\cite{Goe12}. For $|\sigma|=4$ we have to distinguish the two cases $\sigma=\zz_4$ or $\sigma=\zz_2\oplus \zz_2$. The first case, as well as the cases $|\sigma|=3,5,6,7$ follow from the main results in~\cite{GN16}.

Note that due to \cite[p.~56, Proposition]{Lut81} a $(\zz_2\oplus \zz_2)$-symmetric space defined in analogy to a symmetric space (see \cite[Definition 1.1]{Lut81}) is already homogeneous. Observe that in \cite[Proposition 4.3]{Lut81} it is claimed that such a $(\zz_2\oplus \zz_2)$-structure is geometrically formal (the product of harmonic forms is again harmonic) once there exists a compatible metric. This would imply formality as well; in general, however, this does not occur (see for example \cite{KT03}).

\vspace{3mm}

\str  In Section~\ref{sec01} we review the classification of $(\zz_2\oplus \zz_2)$-symmetric spaces. In Section~\ref{sec02} we provide the necessary concepts from rational homotopy theory. The proof of the main result then starts with a case-by-case check of the formality of $(\zz_2\oplus \zz_2)$-symmetric spaces in Section~\ref{sec03}. In order to extend this to equivariant formality, we provide a criterion in Section~\ref{sec04}. We finish the proof of the equivariant formality of the isotropy action on $(\zz_2\oplus \zz_2)$-symmetric spaces in Section~\ref{sec05}. Moreover, we provide a new and short proof for the equivariant formality of the isotropy action on symmetric spaces (building on their well-known formality) in  Section~\ref{sec06}.

\vspace{3mm}

\ack
The authors thank Sam Hagh Shenas Noshari for several helpful discussions and Peter Quast for pointing out the article \cite{Lut81}. The authors also thank the referees for helping to improve the presentation of the article.

The first named author was supported both by a Heisenberg grant and his research grant AM 342/4-1 of the German Research Foundation; he is moreover associated to the DFG Priority Programme 2026.


\section{($\zz_2\oplus\zz_2)$-symmetric spaces}\label{sec01}

A compact homogeneous space~$M = G/H$ is called \emph{$(\zz_2\oplus\zz_2)$-symmetric} if there are two commuting automorphisms~$\alpha$, $\beta$ of~$G$ with $\alpha^2=\beta^2=1$ such that $(G^\alpha \cap G^\beta)_0 \In H \In G^\alpha \cap G^\beta$.

The classification of $(\zz_2\oplus \zz_2)$-symmetric spaces for simple~$G$ was carried out in~\cite[Theorem 14; Tables 1, 2, 3, 4]{BG08} for the classical cases and in~\cite[Theorems 1.2, 1.3; Table 1]{Kol09} for exceptional Lie groups~$G$.

We say that a $(\zz_2\oplus \zz_2)$-symmetric space $G/H$ is \emph{decomposable} if the Lie algebras of~$G$ and $H$ decompose as $\mathfrak{g} = \mathfrak{g}_1 \times \mathfrak{g}_2$ and $\mathfrak{h} = \mathfrak{h}_1 \times \mathfrak{h}_2$, where $\mathfrak{h}_i \In \mathfrak{g}_i$ and where $\mathfrak{g}_1,\mathfrak{g}_2$ are nontrivial---otherwise it is called \emph{indecomposable}. For our purposes, it suffices to consider indecomposable $(\zz_2\oplus \zz_2)$-symmetric spaces.
Note that the isotropy action of $G_1/H_1 \times G_2/H_2$ is equivariantly formal if and only if the isotropy actions of~$G_i/H_i$ are equivariantly formal for~$i=1,2$.

\begin{prop}\label{prop03}
Let $M=G/H'$ be an indecomposable $(\zz_2\oplus \zz_2)$-symmetric space, where $G$ is a connected compact Lie group and $H'$ is a connected closed subgroup such that its Lie algebra~$\mathfrak{h}'$ does not contain a simple ideal of~$\mathfrak{g}$. Then there is an automorphism~$\phi$ of~$G$ such that the pair~$(G,H)$, where $H=\phi(H')$, is one of the following:
\begin{enumerate}
\item \label{DecItemTorus} $G$ is a torus and $H$ a closed connected subgroup;
\item \label{DecItemSimple} $G$ is simple and $H = G$;
\item \label{DecItemSimpleZ} $G$ is simple and $G/H$ is a symmetric space in the usual sense;
\item \label{DecItemSimpleZZ} $G$ is simple and $G/H$ is a $(\zz_2\oplus\zz_2)$-symmetric space;
\item \label{DecItemTwo} up to finite coverings, $G = L \times L$, where $L$ is a simple compact Lie group and $H = \{ (g,g) \mid g \in L \}$;
\item \label{DecItemTwoZ} up to finite coverings, $G = L \times L$, where $L$ is a simple compact Lie group and $H = \{ (g,g) \mid g \in K \}$, where $K \In L$ is such that $L/K$ is a symmetric space in the usual sense;
\item \label{DecItemFour} up to finite coverings, $G = L \times L \times L \times L$ and $H = \{(g,g,g,g) \mid g \in L\}$, where $L$ is a simple compact Lie group.
\end{enumerate}
\end{prop}

\begin{proof}
For the Lie algebra of~$G$, we have the decomposition \[\mathfrak{g} = \mathfrak{g}_0 \oplus \mathfrak{g}_1 \oplus \dots \oplus \mathfrak{g}_k,\] where $\mathfrak{g}_0$ is an abelian ideal and $\mathfrak{g}_1, \dots, \mathfrak{g}_k$ are simple ideals.

The group $\sigma$ acts on~$\mathfrak{g}$ by automorphisms. Since the above decomposition is uniquely determined
and a Lie algebra automorphism maps the center of a Lie algebra onto itself, it follows that we have Case~(\ref{DecItemTorus}) if $\mathfrak{g}_0$ is nontrivial.

Otherwise, we may assume $\mathfrak{g}=\mathfrak{g}_1 \oplus \dots \oplus \mathfrak{g}_k$ is semisimple. Since any Lie algebra automorphism maps the semisimple part of a Lie algebra onto itself and any simple ideal~$\mathfrak{g}_i$ of~$\mathfrak{g}$ onto an isomorphic simple ideal~$\mathfrak{g}_j$ of~$\mathfrak{g}$, it follows that one can define an action of~$\sigma$ on the set $I := \{ 1, \dots, k \}$ by permutations by setting $\sigma(i) = j$ whenever $\sigma(X) \in \mathfrak{g}_j$ for $X \in \mathfrak{g}_i$.
The orbits of this action on~$I$ can have $1$, $2$, or $4$ elements. Thus, since we are dealing with indecomposable spaces, we have $k=1$, $2$, or $4$.

Assume $k = 1$. Then $\mathfrak{g}=\mathfrak{g}_1$ is simple and we have Cases~(\ref{DecItemSimpleZZ}), (\ref{DecItemSimpleZ}), or (\ref{DecItemSimple}), depending on whether the effectivity kernel of the action of  $\sigma$ on~$G$ has $1$, $2$, or $4$ elements.

Now assume $k = 2$. Then $\mathfrak{g}=\mathfrak{g}_1 \oplus \mathfrak{g}_2$, where $\mathfrak{g}_1,\mathfrak{g}_2$ are two isomorphic simple ideals.
Let $\alpha, \beta$ denote two generators of~$\sigma \cong \zz_2\oplus \zz_2$. Then at least one of $\alpha, \beta$ maps $\mathfrak{g}_1$ to $\mathfrak{g}_2$, say $\alpha(\mathfrak{g}_1)=\mathfrak{g}_2$.
Now the other generator $\beta$ either leaves $\mathfrak{g}_1$ invariant or maps~$\mathfrak{g}_1$ to~$\mathfrak{g}_2$.
In the first case define $\gamma := \beta$, in the latter case define $\gamma := \alpha \circ \beta$. In either case, $\alpha$ and $\gamma$ are two generators of~$\sigma$ and $\gamma|_{\mathfrak{g}_1}$ is an automorphism of~$\mathfrak{g}_1$.
Since $\sigma$ is an abelian group we have $\gamma = \alpha \circ \gamma \circ \alpha^{-1}$.
In particular, we have \[\gamma|_{\mathfrak{g}_2}  = \alpha \circ \gamma|_{\mathfrak{g}_1} \circ \alpha^{-1}|_{\mathfrak{g}_2}.\]
If now $\gamma$ acts on~$\mathfrak{g}_1$ as the identity, it also acts on~$\mathfrak{g}_2$ as the identity and we are in Case~(\ref{DecItemTwo}). Otherwise, $\gamma$ acts as a nontrivial involutive automorphism on~$\mathfrak{g}_1$; using now $\alpha$ to identify $\mathfrak{g}_1$ and $\mathfrak{g}_2$, we have $\gamma|_{\mathfrak{g}_1} = \gamma|_{\mathfrak{g}_2}$ and we are in Case~(\ref{DecItemTwoZ}).

If we have $k=4$, we are in the remaining Case~(\ref{DecItemFour}).
\end{proof}

\section{Tools from rational homotopy theory}\label{sec02}

Rational homotopy theory deals with the so-called rational homotopy type of nilpotent spaces. In this short note we shall not try to review this theory, but we point the reader to \cite{FHT01} and \cite{FOT08} as references. The article builds on Sullivan models $(\Lambda V,\dif)$ of nilpotent spaces. Recall the main theorem of rational homotopy theory, which, up to duality, identifies the underlying graded vector space $V$ with the rational homotopy groups $\pi_{\geq 2}(X)\otimes \qq$ of $X$ (with a special interpretation using the Malcev completion in degree $1$).

In particular, the construction of a Sullivan model of homogeneous spaces upon which we draw heavily can be found in \cite[Chapter 15 (f)]{FHT01} or \cite[Chapter 3.4.2]{FOT08}. Recall that finite $H$-spaces have minimal Sullivan models which are graded free exterior algebras generated by finitely many elements. Due to the long exact sequence in homotopy applied to the universal fibration $\B G \to \E G\to G$ of a compact Lie group $G$ (together with the fact that $V$ is dual to rational homotopy groups and a lacunary argument for the differential) we know that $H^*(\B G;\qq)$ is actually a free polynomial algebra generated in even degrees. A usually non-minimal Sullivan model for the homogeneous spaces $G/K$ of compact connected Lie groups with $K$ connected (these spaces are known to be simple, see \cite[Proposition 1.62, p.~31]{FOT08}, whence accessible to rational homotopy theory, and they are the \emph{only} homogeneous spaces which we consider throughout this article) is due to Cartan and given by
\begin{align*}
(H^*(\B K)\otimes \Lambda V_G,\dif)
\end{align*}
where $(\Lambda V_G,0)\cong H^*(G;\qq)$ and the differential $\dif$ vanishes on $H^*(\B K)$. Thus the differential is induced (and extended as a derivation) by its behaviour on a homogeneous basis $(v_i)$ of $V_G$. One finds
\begin{align*}
\dif v=H^*(\B \phi)(v^{(+1)}),
\end{align*}
where $\phi\co K\hto{} G$ is the inclusion and $v^{(+1)}$ denotes the suspension, a $(+1)$-degree shift of $v$. Indeed, for this, from the long exact homotopy sequence of the universal bundle $G\hto{} \E G\to \B G$ one easily recalls from the aforesaid together with the main theorem of rational homotopy theory relating rational homotopy groups to generators of a minimal Sullivan model that a minimal Sullivan model of $\B G$ is of the form $(\Lambda V^{+1}_G,0)$. That is, it is a polynomial algebra identical to its cohomology generated by the suspensions of the $v$, the generators of $V_G$.

Throughout this article we will use the above models of the homogeneous spaces and always use rational coefficients in cohomology.
Recall further the concept of a \emph{pure space} (see \cite[p.~435]{FHT01}); that is, a space admitting a \emph{pure Sullivan model}, i.e., a model with the following decomposition of its minimal Sullivan model: let $(\Lambda V,\dif)$ be such a model, then $\dif|_{V^\even}=0$ and $\dif V^\odd\In \Lambda V^\even$. (We may extend this definition to non-minimal Sullivan models.) The model of a homogeneous space constructed above clearly is of this form (and hence so is its minimal model). Thus we recall
\begin{lemma}[{\cite[Proposition 32.10, p.~444]{FHT01}}]
Let $(\Lambda V,\dif)$ be a pure Sullivan algebra with finite-dimensional cohomology satisfying that $\dim V^\even=\dim V^\odd$; then a homogeneous basis of $V^\odd$ is mapped to a regular sequence in $\Lambda V^\even$, and the algebra is formal, in particular.
\end{lemma}
\vspace{5mm}

Let us finally recall the concept of formality here. A nilpotent space is formal if there is a chain of quasi-isomorphisms, i.e., of morphisms of cochain algebras inducing isomorphisms on cohomology, between the cochain algebra of polynomial differential forms $\APL(X)$ and the cohomology $H^*(X;\qq)$. As formality does not depend on the field extension of $\qq$ (see \cite[Corollary 6.9, p.~237]{HS79}) we may replace $\APL(X)$ by de Rham forms $\ADR(M)$ on a smooth manifold. Another simplification may be formulated using minimal Sullivan models $(\Lambda V,\dif)\xto{\sim} \ADR(M)$ (now over the reals). Then $M$ is formal if and only if there is a quasi-isomorphism $(\Lambda V,\dif)\xto{\sim} H^*(M;\rr)$.

Note that a finite product is formal if and only if so are its factors---we shall actually only use the completely obvious implication that the formality of the factors implies the formality of the product.

It is well-known that a homogeneous space $G/H$ (with $G$, $H$ compact and $H$ connected) is formal if and only if $G/T_H$ is formal, where $T_H$ is a maximal torus of~$H$. This can be easily derived from \cite[Theorem A]{AK12} applied to the fibration
\begin{align*}
H/T_H\hto{} G/T_H\to G/H
\end{align*}
using the fact that $H/T_H$ satisfies the Halperin conjecture, i.e., its cohomology (basically for degree reasons) has no non-trivial derivations of negative degree --- and was known before (see \cite[Remark, p.~212]{Oni94}).

\begin{cor}\label{cor03}
Let $H,H'\In G$. Suppose that the maximal torus $T_H\In H$ is conjugate in $G$ to the maximal torus $T_{H'}\In H'$. Then $G/H$ is formal if and only if $G/H'$ is formal.
\end{cor}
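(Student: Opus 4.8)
The plan is to reduce the statement entirely to the torus-reduction principle recalled immediately above, namely that a homogeneous space $G/K$ (with $G,K$ compact and $K$ connected) is formal if and only if $G/T_K$ is formal, where $T_K\In K$ denotes a maximal torus. Applying this equivalence to both $H$ and $H'$ gives
\[
G/H \text{ formal} \iff G/T_H \text{ formal}, \qquad G/H' \text{ formal} \iff G/T_{H'} \text{ formal},
\]
so that it suffices to establish that $G/T_H$ is formal if and only if $G/T_{H'}$ is. The whole corollary then follows by chaining these three equivalences.

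For the remaining middle equivalence I would exploit the hypothesis directly. Write $T_{H'}=gT_Hg^{-1}$ for some $g\in G$. Then the assignment induced by the right translation $x\mapsto xg^{-1}$ descends to a diffeomorphism $G/T_H\xto{\sim} G/T_{H'}$; the only point to verify is that the relation $gT_Hg^{-1}=T_{H'}$ makes this map independent of the chosen coset representative, which is an immediate coset computation. Since formality depends only on the rational homotopy type, and diffeomorphic (indeed homotopy equivalent) manifolds share their rational homotopy type, $G/T_H$ is formal exactly when $G/T_{H'}$ is formal.

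I expect no genuine obstacle here beyond the input result being cited; the argument is essentially a diffeomorphism-invariance observation glued to the torus-reduction principle. The one point deserving a word of care is the bookkeeping around conjugacy of maximal tori: all maximal tori of $H$ are conjugate within $H$, hence in $G$, so the phrase ``the maximal torus $T_H$'' is unambiguous up to $G$-conjugacy and the torus-reduction principle applies irrespective of the chosen maximal torus. One should also record that the cited equivalence is stated for connected compact subgroups, which is the standing hypothesis on $H$ and $H'$ throughout this article, so that $G/T_H$ and $G/T_{H'}$ are in particular simple spaces to which formality applies.
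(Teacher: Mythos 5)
Your proposal is correct and follows essentially the same route as the paper: the paper likewise combines the torus-reduction principle (that $G/K$ is formal if and only if $G/T_K$ is) with the observation that conjugation induces a diffeomorphism $G/T_H \cong G/T_{H'}$, which is precisely the right-translation map you write down. Your additional remarks on well-definedness of the coset map and on connectedness hypotheses are fine but not needed beyond what the paper's one-line proof already implies.
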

\begin{prf}
Conjugation induces a diffeomorphism $G/T_H\cong G/T_{H'}$.
\end{prf}

Recall the notion of an \emph{elliptic space}, which is a space that is nilpotent with finite dimensional rational homotopy and finite dimensional rational cohomology. For pure elliptic models formality implies a well-known strong structural result (for example, see \cite{Ama13}, \cite[Proposition 32.10, p.~444]{FHT01}):

\begin{prop}\label{prop02}
Let $(\Lambda V,\dif)$ be a pure elliptic minimal Sullivan algebra. Then up to isomorphism it is of the form
\begin{align*}
(\Lambda V,\dif)\cong (\Lambda V',\dif) \otimes (\Lambda\langle x_i\rangle_{1\leq i\leq k}, 0)
\end{align*}
with $(\Lambda V',\dif)$ elliptic, with $k\geq 0$, and with $\deg x_i$ necessarily odd.
In particular, it is formal if and only if $(\Lambda V',\dif)$ has positive Euler characteristic. In this case $\dif$ maps a homogeneous basis of $V'$ to a regular sequence in $\Lambda (V')^\even$.

This is necessarily the case if $\dim V^\even=\dim V^\odd$ (whence $k=0$).

In the case of a compact homogeneous space~$G/K$, the number~$k$ is the difference~$\rk G- \rk K$.
\end{prop}
Clearly, this is equivalent to a respective decomposition of the cohomology of the pure model into a factor of positive Euler characteristic and a free exterior algebra generated in odd degrees.
In this article, we will call the number~$\rk G- \rk K$ the \emph{corank} of~$K \In G$.
We call a rationally elliptic space of positive Euler characteristic \emph{positively elliptic}.

\section{Formality: Stepping through the cases}\label{sec03}

\subsection{Reducing to the case of simple $G$}\label{secfrt}

\begin{lemma}
The spaces $G/H$, where $(G,H)$ is a pair as given in Proposition~\ref{prop03}, are formal if $G$~is non-simple.
\end{lemma}
\begin{prf}
\textbf{(\ref{DecItemTorus})} If $G$ is a torus and $H$ a closed connected subgroup, then it is again a torus, and so will be $G/H$. Hence $G/H$ is formal.

\textbf{(\ref{DecItemTwo})} If $G = L \times L$, where $L$ is a simple compact Lie group and
$H = \{ (g,g) \mid g \in L \}$, then the quotient is just diffeomorphic to the antidiagonal, which again is diffeomorphic to $L$, hence formal

\textbf{(\ref{DecItemTwoZ})} If $G = L \times L$, where $L$ is a simple compact Lie group and
$H = \{ (g,g) \mid g \in K \}$ with $K \In L$ such that $L/K$ is a symmetric space, then we can argue as follows: The inclusion of~$H$ in~$G$ is the composition of the inclusions $H=K\In L \In L\times L=G$, where the latter is a diagonal inclusion. It follows that the Sullivan model of $G/H$ is of the form
\begin{align*}
(\Lambda \langle x_1,\dots , x_l\rangle,0) \otimes (A,\dif),
\end{align*}
where the first factor is a model of the antidiagonal $\{(l,l^{-1}) \mid l\in L\}$ in $L\times L$, and $(A,\dif)$ is a model of $L/K$. Hence it is a tensor product of formal algebras and $G/H$ is formal.

\textbf{(\ref{DecItemFour})}  If $G = L \times L \times L \times L$ and $H = \{(g,g,g,g) \mid g \in L\}$, where $L$ is a simple compact Lie group, then $G/H$ is diffeomorphic to $L\times L\times L$, realised as a complement of the diagonal in $L\times L \times L \times L$, hence formal.
\end{prf}

\subsection{Coverings}

We remark that if $G/H$ is a homogeneous space of compact Lie groups $G$, $H$, and both $G$ and $H$ are connected, then $G/H$ is a simple space (see \cite[Proposition 1.62, p.~31]{FOT08}), and the techniques of rational homotopy theory apply. In particular, a Sullivan model of $G/H$ can be constructed as described in Section~\ref{sec02}. However, this implies that if $G'/H'$ is a homogeneous space of connected Lie groups and a finite covering of $G/H$ (such that both $G$, $H$ are connected and $G'$ covers~$G$ in such a way that $H'$ covers $H$), then the model we construct for it is identical to the one of $G/H$. Indeed, for this we note that the inclusions of the maximal tori $T_H\hto{} T_G$ and $T_{H'}\hto{} T_{G'}$ which completely determine the models (together with the given Weyl groups, which remain the same under finite coverings) are uniquely determined by local data, namely, by the inclusions of the corresponding Lie algebras $\mathfrak{t}_H=\mathfrak{t}_{H'}\hto{} \mathfrak{t}_G=\mathfrak{t}_{G'}$ due to the commutativity and surjectivity properties of the exponential map on compact Lie groups.

Hence, the spaces $G/H$ and $G'/H'$, which may differ by a finite covering, are rationally not distinct as long as all groups are connected; hence, in the following, we may pick a representative out of the each equivalence class.

$(\zz_2\oplus \zz_2)$-symmetric spaces $G/H$ where $G$ is a simple compact Lie group have been classified in~\cite{BG08} and~\cite{Kol09}.

\subsection{Homogeneous spaces of equal rank groups} These spaces are positively elliptic spaces with cohomology concentrated in even degrees given as the quotient of a polynomial algebra by a regular sequence (see \cite[Proposition 32.2, p.~436]{FHT01}). In particular, they are formal.

The following $(\zz_2\oplus \zz_2)$-spaces fall in this category:
\begin{itemize}
\item $\SU(a+b+c)/{\mathbf S}(\U(a)\U(b)\U(c))$;
\item $\SU(a+b+c+d)/{\mathbf S}(\U(a)\U(b)\U(c)\U(d))$;
\item $\SO(a+b+c)/\SO(a)\SO(b)\SO(c)$, where at most one of~$a,b,c$ is odd;
\item $\SO(a+b+c+d)/\SO(a)\SO(b)\SO(c)\SO(d)$, where at most one of~$a,b,c,d$ is odd;
\item $\Sp(a+b+c)/\Sp(a)\Sp(b)\Sp(c)$;
\item $\SO(2a+2b)/\U(a)\U(b)$;
\item $\Sp(a+b+c+d)/\Sp(a)\Sp(b)\Sp(c)\Sp(d)$;
\item $\E_6/\U(3)\U(3)$;
\item $\E_6/\Spin(6)\Spin(4)\U(1)$;
\item $\E_6/\U(5)\U(1)$;
\item $\E_6/\Spin(8)\SO(2)\U(1)$;
\item all $(\zz_2\oplus \zz_2)$-symmetric spaces~$G/H$ where $G = \E_7$, $\E_8$, $\F_4$, or $\G_2$ except for $\E_7/\F_4$, $\E_7/\Sp(4)$, $\E_7/\Spin(8)$.
\end{itemize}
For concrete inclusions here and in the following subsections see the cited classifications \cite[Theorem~14; Tables~1, 2, 3, 4]{BG08} and \cite[Theorems~1.2, 1.3; Table~1]{Kol09}.

\subsection{Spaces sharing the maximal torus with a symmetric space}

As a consequence of Corollary~\ref{cor03} we note that any space~$G/H$ such that $H$ shares its maximal torus with some~$H'$ such that $G/H'$ is a symmetric space is formal.

We apply this to the following $(\zz_2\oplus \zz_2)$-symmetric spaces:
\begin{itemize}
\item $\SU(2k)/\U(k)$, where~$\U(k) \In \Sp(k) \In \SU(2k)$;
\item $\SU(n+m)/\SO(n)\SO(m)$, where at most one of $n$, $m$ is odd;
\item $\SU(2a+2b)/\Sp(a)\Sp(b)$, where $\Sp(a)\Sp(b) \In {\mathbf S(\U(2a)\U(2b))}$;
\item $\SO(a+b+c)/\SO(a)\SO(b)\SO(c)$, where~$a,b$ are odd and $c$ is even, we have $\SO(a)\SO(b)\SO(c) \In \SO(a)\SO(b+c) \In \SO(a+b+c)$;
\item $\SO(a+b+c+d)/\SO(a)\SO(b)\SO(c)\SO(d)$, where~$a,b$ are odd and $c,d$ are even, we have $\SO(a)\SO(b)\SO(c)\SO(d) \In \SO(a)\SO(b+c+d) \In \SO(a+b+c+d)$;
\item $\Sp(a+b)/\U(a)\U(b)$, here we have $\U(a)\U(b) \In \U(a+b) \In \Sp(a+b)$;
\item $\E_6/\U(4)$, where $\U(4) \In \pp\Sp(4) \In \E_6$;
\item $\E_6/\Spin(9)$;
\item $\E_6/\Sp(2)\Sp(2)$, where $\Sp(2)\Sp(2) \In \pp\Sp(4) \In \E_6$;
\item $\E_6/\Sp(3)\Sp(1)$, where $\Sp(3)\Sp(1) \In \pp\Sp(4) \In \E_6$.
\end{itemize}
We observe that $\Spin(9)$ and $\Sp(2)\Sp(2)$ share a common maximal torus (up to covering); so do $\SO(n)\SO(m)$ and $\SO(n+m)$.
We see that there are a symmetric space $\E_6/\pp\Sp(4)$ and inclusions
\begin{align*}
(\U(4)\In )\pp\Sp(4) \Ni \Sp(2)\Sp(2).
\end{align*}
All of these groups have rank $4$, whence they share a common maximal torus in $\E_6$.

As for sharing the maximal torus with $\Spin(9)$ we draw on the symmetric space $\E_6/\Spin(10)\U(1)$. The group $\Sp(2)\Sp(2)$ in the form of $\Spin(5)\Spin(5)$ is contained in this $\Spin(10)$-subgroup and shares a maximal torus with $\Spin(9)$ in $\E_6$. (This follows easily from the block inclusions of $\SO(5)\SO(5)\In \SO(10)\supseteq \SO(9)$.)

\subsection{The remaining spaces}\label{subsec01}
The remaining $(\zz_2\oplus \zz_2)$-spaces not covered by the above two cases are the following:

\begin{itemize}
\item $\SU(2n)/\SU(n)$;
\item $\SU(n+m)/\SO(n)\SO(m)$, where $n$ and $m$ are odd;
\item $\SO(a+b+c)/\SO(a)\SO(b)\SO(c)$, where 
all three of~$a,b,c$ are odd;
\item $\SO(a+b+c+d)/\SO(a)\SO(b)\SO(c)\SO(d)$, where three or four of~$a,b,c,d$ are odd;
\item $\SO(2n)/\SO(n)$;
\item $\SO(4n)/\Sp(n)$;
\item $\Sp(2n)/\Sp(n)$;
\item $\Sp(n)/\SO(n)$;
\item $\E_7/\Sp(4)$;
\item $\E_7/\SO(8)$;
\item $\E_7/\F_4$.
\end{itemize}

\subsection{Diagonal double block inclusions}\label{subsec02}
In the following we shall deal with the remaining cases.

\case{1} Let us consider the space $\Sp(2n)/\Sp(n)$, which is given by the two-block diagonal inclusion of $\Sp(n)$ into $\Sp(2n)$, see~\cite[Section~4.2.2]{BG08}.
We use the convention
\[
\spl(n) =\left \{
\left( \left.
  \begin{array}{rc}
    A & B \\
    -\bar B & \bar A \\
  \end{array}
\right) \right|
A,B \in \C^{n \times n}, A=-\bar{A}^t, B=B^t
\right \}.
\]
Now using the inclusions $\spl(n) \In \spl(n) \oplus \spl(n) \In \spl(2n)$, we obtain
the subalgebra
\[
\left \{
\left( \left.
         \begin{array}{rrcc}
           A & 0 & B & 0 \\
           0 & A & 0 & B \\
           -\bar B & 0 & \bar A & 0 \\
           0 & -\bar B & 0 & \bar A \\
         \end{array}
       \right)
 \right|
A,B \in \C^{n \times n}, A=-\bar{A}^t, B=B^t
\right \}  \In \spl(2n).
\]

The standard inclusion $\Sp(n)\hto{} \Sp(2n)$ is rationally $2n$-connected. As in the following cases for corresponding inclusions of unitary and orthogonal groups, this can be verified by direct computation or by using one of \cite[p.~220]{FHT01} and \cite[Chapter~3, proofs of 3.1, 3.15, 3.17]{MT91}. For the convenience of the reader we sketch the computation. We prove the analogous statement for the induced map between classifying spaces. For this we first identify rational homotopy groups with cohomology generators and second we describe the latter as polynomials in the cohomology generators $t_i^2$ of the classifying space of the maximal torus invariant under the action of the Weyl group. {That is, these generators/Pontryagin classes identify with elementary symmetric polynomials in the $t_i^2$ under the left and right hand maps
\begin{align*}
H^*(\B \Sp(n))\hto{} H^*(\B T_{\Sp(n)}) \xleftarrow{} H^*(\B T_{\Sp(2n)}) \hookleftarrow{} H^*(\B \Sp(2n))
\end{align*}
with the middle map being the obvious projection induced by $t_{n+i}\mapsto t_i$ for $0<i\leq n$. It follows that corresponding generators are mapped to each other. This may be seen  more easily using power sums $\sum_{i=1}^k (t_i^2)^k$ as generators instead of elementary symmetric polynomials.

Let us now provide the formality argument. The connectedness property implies that the induced map on rational homotopy groups of the considered inclusion $\Sp(n)\hto{} \Sp(2n)$ is multiplication by two in degrees $1\leq i \leq 4n-1$ and, clearly, the projection to zero in degrees $4n+1 \leq i\leq 8n-1$, as the rational homotopy groups of $\Sp(n)$ are concentrated below degree $4n+1$. This implies that $\pi_*(\Sp(2n)/\Sp(n))\otimes \qq$ is concentrated in odd degrees. More precisely, considering the model
\begin{align*}
(\Lambda (V_{\B \Sp(n)} \oplus V_{\Sp(2n)}),\dif)
\end{align*}
of this homogeneous space where 
$(\Lambda V_{\B \Sp(n)},0)$ and $(\Lambda V_{\Sp(2n)},0)$ are minimal models for $\B \Sp(n)$ and $\Sp(2n)$ respectively. We deduce
\begin{align*}
(\Lambda (V_{\B \Sp(n)} \oplus V_{\Sp(2n)}),\dif)\simeq (\Lambda \langle v_{4n+1}, \dots, v_{8n-1}\rangle,0),
\end{align*}
with generators $v_i$ of degree $i$ corresponding to the rational homotopy groups of $\Sp(2n)$ in degree $i$.

This can be seen as follows:
The linear part $\dif_0$ of the differential $\dif$ corresponds to the transgression in the dual long exact homotopy sequence of the fibration $\Sp(2n) \hto{} \Sp(2n)/\Sp(n) \to \B\Sp(n)$. The generators of the underlying vector spaces of the minimal models identify with the rational homotopy groups up to duality. It follows that the vector spaces $V_{\B \Sp(n)}$ and $V_{\Sp(2n)}$ are graded isomorphic via $\dif_0$ (raising degree by $+1$) in degrees below $4n+1$. In other words, up to a contractible algebra, a Sullivan model of the homogeneous space is concentrated in degrees at least $4n+1$, i.e., so is its minimal Sullivan model. Moreover, without loss of generality, the differential on the remaining generators becomes $0$. Consequently, $\Sp(2n)/\Sp(n)$ is formal.

\vspace{5mm}

\case{2}  The space $\SU(2n)/\SU(n)$. Here, the embedding of $\SU(n)$ into $\SU(2n)$ is given by $K=\{\diag(X,X) \mid X \in \SU(n) \}$; see the last case in~\cite[Section~4.3]{BG08}. From the same sources or with the analogous arguments as in Case 1 we derive that the standard inclusion $\SU(n)\hto{} \SU(2n)$ is (rationally) $(2n+1)$-connected. Again, it follows that our considered inclusion induces multiplication by $2$ on rational homotopy groups in degrees at most $2n+1$ and the trivial map in higher degrees. As for the corresponding model of the homogeneous space we deduce the quasi-isomorphism
\begin{align*}
(\Lambda (V_{\B \SU(n)} \oplus V_{\SU(2n)}),\dif) \simeq (\Lambda \langle v_{2n+1}, \dots, v_{4n-1}\rangle,0).
\end{align*}
Clearly, just as in Case 1, this results from taking the quotient by the contractible algebra of all those elements of degree at most $2n$. Formality follows likewise.

\vspace{5mm}

\case{3} Basically the same arguments as in the first case apply to $\SO(2n)/\SO(n)$ with the double block standard inclusion. Here, however, we have to distinguish two cases.

First, we assume that $n$ is odd. The cohomology of $\B \SO(2n)$ is generated by elementary symmetric polynomials in the $t_i^2$ (except for the top degree one) where the $t_i$ generate $H^*(\B T)$ with $T\In \SO(2n)$ the maximal torus, together with the polynomial $t_1 t_2 \cdots t_n$ in degree $2n$ (compensating for the lack of $t_1^2 \cdots t_n^2$). Using the analogous description for $\B\SO(n)$ we find that the standard inclusion $\SO(n)\hto{} \SO(2n)$ injects all the rational homotopy groups of $\SO(n)$. They lie in degrees $3,7, \ldots, 2n-3$ and, in these degrees, they correspond bijectively to the rational homotopy of $\SO(2n)$, which are concentrated in degrees $3,7,\ldots,4n-5$ and $2n-1$. Interpreting these results for the double blockwise inclusion, we see that in these low degrees the induced map on rational homotopy groups is just multiplication with $2$ again. Thus, by taking the quotient by a contractible algebra, we find
\begin{align*}
(\Lambda (V_{\B \SO(n)} \oplus V_{\SO(2n)}),\dif)\simeq (\Lambda \langle v'_{2n-1}, v_{2n+1}, v_{2n+5}, \ldots, v_{4n-5}\rangle,0).
\end{align*}
Again, this homogeneous space rationally is a product of odd-dimensional spheres and formal, in particular.

\vspace{5mm}

Let us now assume that $n$ is even. With the analogous arguments we find that all the rational homotopy groups of $\SO(n)$ which are represented by symmetric polynomials in the $t_i^2$ are mapped bijectively to the corresponding ones in $\SO(2n)$. There remains the class $x_{n}$ represented by $t_1 \cdots t_{n/2}$ and the minimal model is
\begin{align*}
(\Lambda (V_{\B \SO(n)} \oplus V_{\SO(2n)}),\dif)\simeq (\Lambda \langle x_{n}, v'_{2n-1}, v_{2n-1}, v_{2n+3}, \ldots, v_{4n-5}\rangle,\dif)
\end{align*}
with $\dif(v'_{2n-1})=x_{n}^2$ and $\dif$ vanishing on all other generators. (The first property holds, since $t_1\cdots t_n$ maps to $t_1^2\cdots t^2_{n/2}$ under the morphism induced on the cohomologies of classifying spaces by diagonal inclusion.) Thus, the space rationally is the product of odd-dimensional spheres with exactly one even-dimensional sphere and again formal.

\vspace{5mm}

\case{4} We consider $\SO(4n)/\Sp(n)$. The morphism induced in the cohomology of classifying spaces $H^*(\B \SO(4n)) \to H^*(\B \Sp(n))$ is induced on formal roots by \[t_i\mapsto \tilde t_{i/2}, \quad t_{i+1} \mapsto -\tilde t_{i/2}\] for $i \equiv 0 \mod 2$ for \[H^*(\B T_{\SO(4n)})=\qq[t_1,\ldots, t_{2n}],\quad H^*(\B T_{\Sp(n)})=\qq[\tilde t_1,\ldots, \tilde t_{n}].\] In particular, this means that the $k$-th elementary symmetric polynomial in the $t_i^2$ is mapped to a non-trivial multiple of the $k$-th elementary polynomial in the $\tilde t_i^2$ modulo another symmetric polynomial generated by the first $k-1$ elementary symmetric polynomials. Alternatively: a power sum in the $t_i^2$ maps to twice a respective one in the $\tilde t_i^2$. By either perspective we derive that the inclusion induces an isomorphism on the first $n$ non-trivial rational homotopy groups. In other words, it follows that the map induced in the cohomology of classifying spaces is surjective, and that $H^*(\SO(4n)/\Sp(n))$ is an exterior algebra, which is formal.

\vspace{5mm}

\case{5} Let us now deal with $\Sp(n)/\SO(n)$ in a similar manner.
The inclusion of $\SO(n)\In \Sp(n)$ is induced by the componentwise inclusion $\rr\to\hh$. That is, on the torus we have the induced morphism
\[t_1\mapsto t_1,\; t_2\mapsto -t_1, \; t_3\mapsto t_2, t_4\mapsto -t_2,\; t_5\mapsto t_3, t_6\mapsto - t_3\ldots,\; t_n\mapsto 0\]
if $n$ is odd, and
\[t_1\mapsto t_1,\; t_2\mapsto -t_1, \; t_3\mapsto t_2, t_4\mapsto -t_2,\; t_5\mapsto t_3, t_6\mapsto - t_3\ldots,\; t_n\mapsto  -t_{ n/2 }\]
if $n$ is even.

Let us first assume that $n$ is odd. Then these considerations imply that the inclusion $\SO(n)\In \Sp(n)$ is rationally $4((n-1)/2)$-connected. Again, the quotient is just a free commutative graded algebra on odd generators. Indeed, we see that the power sums $\sum_{1\leq i\leq n} (t_i^2)^k$ (for $1\leq  k\leq n$) map to respective power sums $2\sum_{1\leq i\leq n/2 } (t_i^2)^k$.

If $n$ is even, then the same arguments imply that the quotient is just a product of an even-dimensional sphere (with volume form corresponding to the product $t_1\cdots t_{n/2}$, the $t_i$ coming from $H^*(\B\SO(n))$) and a free commutative algebra generated in odd degrees. In any case the space is formal.

\subsection{The space $\SU(n+m)/\SO(n)\times \SO(m)$ where $n$ and $m$ are odd}\label{subsec03}

Let us begin with an observation.
\begin{lemma}\label{lemfo}
Suppose that $G/H$ with inclusion $i\co H\to G$ is such that $\rk G-\rk H=k$ and $\dim \ker \pi^*(i)\otimes \qq=k$ (on dual rational homotopy groups), then $G/H$ is formal.
\end{lemma}
\begin{prf}
In this situation the induced morphism $\pi^*(\B G)\otimes \qq \to \pi^*(\B H)\otimes \qq$ has $k$-dimensional kernel as well. This implies that in the differential of the standard model of $G/H$ which we use, the differential vanishes on $k$ generators. Since $G/H$ has finite dimensional cohomology, we know that in the decomposition of Proposition~\ref{prop02} we have that $\dim (V')^\odd \geq (\dim V')^\even$. Counting generators it follows that equality holds in this inequality, and $(\Lambda V,\dif)$ is formal.
\end{prf}

We shall apply this lemma in this section and the subsequent ones.

We consider $\SU(2n+2m+2)/\SO(2n+1)\times \SO(2m+1)$.  We compute the Sullivan model of the space. (We may assume that the maximal torus of the first denominator factor is embedded into complex coordinates $1$ to $2n$, the one of the second factor into coordinates $2n+3$ to $2n+2m+2$.)
We obtain the Sullivan model
\begin{align*}
(\Lambda (V_{\B \SO(2n+1)}\oplus V_{\B \SO(2m+1)} \oplus V_{\SU(2n+2m+2)}),\dif)
\end{align*}
with the differential induced by
\begin{align*}
&t_1\mapsto t_1, t_2\mapsto -t_1, t_3\mapsto t_3, t_4\mapsto -t_3, \ldots,
\\&t_{2n-1}\mapsto t_{2n-1}, t_{2n}\mapsto t_{2n-1},
\\&t_{2n+1}\mapsto 0, t_{2n+2}\mapsto 0,
\\&t_{2n+3}\mapsto t_{2n+3}, t_{2n+4}\mapsto -t_{2n+3}, \ldots,
\\& t_{2n+2m+1}\mapsto t_{2n+2m+1}, t_{2n+2m+2}\mapsto - t_{2n+2m+1}
\end{align*}
where $t_1,t_3,t_5, \ldots, t_{2n-1}$ generate the cohomology of the torus of $\SO(2n+1)$ and $t_{2n+3}, t_{2n+5}, \ldots, t_{2n+2m+1}$ generate the cohomology of the torus of $\SO(2m+1)$.

Since the rational homotopy groups of $\B\SO(2n+1)$ respectively the ones of $\B\SO(2m+1)$ are concentrated in degrees divisible by four, so is their rational cohomology, and we derive that $\dif v=0$ for $v\in V_{\SU(2n+2m+2)}$ of degree congruent to $1$ modulo $4$. It follows that
\begin{align*}
\dif v_5=\dif v_9=\dif v_{11}=\dots=\dif v_{2(2n+2m+2)-3}=0
\end{align*}
The differential of the generator in top degree $2(2n+2m+2)-1$ corresponds to
\begin{align*}
t_1t_2\cdots t_{2n+2m+2}\mapsto \pm t_1^2t_3^2t_5^2\cdots t_{2n-1}^2\cdot 0\cdot 0 \cdot t_{2n+3}^2t_{2n+5}^2\cdots t_{2n+2m+1}^2
\end{align*}
Consequently, also its differential vanishes. Thus the model splits as
\begin{align*}
&(\Lambda (V_{\B \SO(2n+1)}\oplus V_{\B \SO(2m+1)} \oplus V_{\SU(2n+2m+2)}),\dif)
\\ \simeq & (\Lambda (V_{\B \SO(2n+1)}\oplus V_{\B \SO(2m+1)} \oplus \langle v_3, v_5, v_7, \ldots, v_{2(2n+2m+2)-5}\rangle),\dif)\\ & \otimes (\Lambda \langle v_{2(2n+2m+2)-3}, v_{2(2n+2m+2)-1}\rangle,0)
\end{align*}

The crucial observation we now make is that the first factor is exactly the Sullivan model one constructs for the homogeneous space $\SU(2n+2m)/(\Sp(n)\times \Sp(m))$. The latter space shares its maximal torus with the symmetric space $\SU(2n+2m)/\Sp(n+m)$; thus it is formal.

Alternatively, due to the observation above this model is of the form
\begin{align*}
&(\Lambda (V_{\B \SO(2n+1)}\oplus V_{\B \SO(2m+1)} \oplus V_{\SU(2n+2m+2)}),\dif)
\\ = & (\Lambda (V_{\B \SO(2n+1)}\oplus V_{\B \SO(2m+1)} \oplus \langle v_3, v_7, v_{11}, \ldots, v_{2(2n+2m+2)-5}\rangle),\dif)\\ & \otimes \Lambda \langle v_5, v_9, \ldots, v_{2(2n+2m+2)-3}, v_{2(2n+2m+2)-1}\rangle,0)
\end{align*}
So let us spell out and thereby illustrate an application of Lemma~\ref{lemfo} in this case: The first factor has as many generators as relations, i.e., it is positively elliptic and formal.
Hence the model of $\SU(2n+2m+2)/\SO(2n+1)\times \SO(2m+1)$ is the product of two formal Sullivan algebras and formal, consequently.

\subsection{The spaces $\SO(a+b+c)/\SO(a)\SO(b)\SO(c)$, where all three of the $a,b,c$ are odd and $\SO(a+b+c+d)/\SO(a)\SO(b)\SO(c)\SO(d)$, where three or four of the $a,b,c,d$ are odd}\label{subsec04}
Let us first deal with the first space. The inclusion of the stabiliser group is blockwise. The stabiliser group has corank $1$. Thus, in view of Lemma~\ref{lemfo}, it suffices to observe that
the top elementary symmetric polynomial in the $t_i^2$ from $\SO(a+b+c)$, namely $t_1^2\cdots t_{(a+b+c-1)/2}^2$, maps to zero under the map induced on classifying spaces by the inclusion of the stabiliser. Consequently, the remaining cohomology generators of the numerator need to map to a regular sequence in $H^*(\B(\SO(a)\SO(b)\SO(c))$, since cohomology is finite-dimensional (see Proposition~\ref{prop02}). It follows that the minimal model for the homogeneous space splits as this positively elliptic space tensored with the free commutative algebra generated by the generator of $\SO(a+b+c)$ corresponding to $t_1^2\cdots t_{(a+b+c-1)/2}^2$.

\vspace{5mm}

Let us now consider the second space and first assume that all of $a,b,c,d$ are odd. Then the stabiliser has corank two and, similar to the last case, now the top two rational homotopy groups generated by $t_1^2\cdots t_{(a+b+c-1)/2}^2$ and by the previous elementary symmetric polynomial in the $t_i^2$ of $\SO(a+b+c+d)$ map to zero. For this it simply remains to observe that also any monomial of the form $t_1^2\cdots \hat t_i^2 \cdots t_{(a+b+c-1)/2}^2$ (where $\hat \cdot$ denotes omission) maps to $0$ under restriction as well, as, by corank, there is necessarily one $t_j$ in this product which goes to zero.

Thus the homogeneous space rationally is the product of a positively elliptic space and the free algebra generated by two elements. In particular, due to Lemma~\ref{lemfo} it is formal.

Suppose now that $a,b,c$ are odd, $d$ is even. Then the corank is $1$, the top rational homotopy group restricts to zero; again we have a similar splitting and formality due to Lemma~\ref{lemfo}.

\subsection{The spaces $\E_7/\F_4$, $\E_7/\Sp(4)$, $\E_7/\SO(8)$}\label{subsec05}
In order to prove the formality of $\E_7/\F_4$ we shall use merely two pieces of information:
\begin{enumerate}
\item the rational homotopy groups of $\E_7$ and $\F_4$,
\item the fact that the inclusion of simple simply-connected compact Lie groups induces an isomorphism in third rational cohomology, i.e., is rationally $4$-connected.
\end{enumerate}
In Section~\ref{sec05} when we prove the equivariant formality of the isotropy action of these spaces, we will need a little more information which we will cite from the literature.

Let us elaborate briefly on the information we use.
We cite the rational homotopy groups of the exceptional Lie groups from \cite[p.~347]{MT91}.}
The rational homotopy groups of $\E_7$ are concentrated in degrees
\begin{align*}
3,11,15,19,23,27,35,
\end{align*}
and each group is one-dimensional.
For $\F_4$ we have one-dimensional rational homotopy groups in degrees
\begin{align*}
3,11,15,23.
\end{align*}

In our situation the third cohomology group is spanned the form \linebreak[4]$g(x,y,z)=B(x,[y,z])$ where $B$ is the Killing form. The pullback of a Killing form is an integral non-trivial multiple of the Killing form, since so is every invariant non-trivial bilinear form.
We shall essentially draw on the following observation for a pure elliptic Sullivan algebra $(\Lambda V,\dif)$ with basis $(v_i)_i$ of $V^\even$: since cohomology is finite dimensional, any cohomology class $[v_i]$ is nilpotent. That is, in particular, for every $v_i$ there exists some $x_i\in \Lambda V$ with $\dif x_i=v_i^{k_i}$ modulo the ideal generated by the $v_j\neq v_i$ for some $k_i\in \nn_0$.
Moreover, for minimal such $k_i$ (which we may assume without loss of generality), the $x_i$ necessarily lie in $V^\odd$ by word-length considerations, since $\dif$ is a derivation.
In our case, the degrees of the $v_i$ and $x_i$ are determined by the specific Lie groups. Hence this observation will impose severe restrictions on the structure of the differential. More precisely, we shall see that (up to isomorphism) there are $\rk G-\rk H$ generators $x_i$ of $H^*(G)$, i.e., rational homotopy group generators of $G$, with $\dif x_i=0$. An application of Lemma~\ref{lemfo} then yields formality.

\vspace{5mm}

We are now ready to start computing a Sullivan model of $\E_7/\F_4$. It is given by
\begin{align*}
(\Lambda (V_{\B \F_4} \oplus V_{\E_7}),\dif)
\end{align*}
and the differential vanishes on the first summand. Write
\begin{align*}
V_{\B \F_4}=\langle v_{4},v_{12},v_{16},v_{24}\rangle.
\end{align*}
The cohomology is finite-dimensional.
Denote by $x_i$ a generator of the minimal model of $\E_7$ of degree $i$ and by $v_i$ such a generator of the minimal model of $\B \F_4$. It follows by degree considerations that $\dif x_3=v_4$, $\dif z_{11}=k_{11}v_{12}$ modulo the ideal generated by $v_4$, $I(v_4)$, for some $k_{11}\in \qq$. We invoke the principle discussed above now and derive that, without loss of generality, $\dif x_{23}=v_{24}$---by degree considerations there is no element mapping to a power of $v_{24}$ other than in degree~$23$; moreover, by word-length, $x_{23}$ is the only element which can map to $v_{24}$. The same argument yields that, without loss of generality, $\dif x_{15}=v_{16}$. Applying this reasoning to $v_{12}$ lets us deduce that either it is hit by $x_{11}$, its square is hit by $x_{23}$, or its cube by $x_{35}$. Since we already had $\dif x_{23}=v_{24}$, these two elements contract in the minimal model, and using the reasoning on the minimal model then shows that, without loss of generality, $\dif x_{11}=v_{12}$ or $\dif x_{35}=v_{12}^3$.

Let us now distinguish these two cases: In the first case a minimal model of the homogeneous space is $(\Lambda \langle x_{19}, x_{27}, x_{35}\rangle,0)$. In the second case it is  $(\Lambda \langle v_{12}, x_{11}, x_{19}, x_{27}\rangle, 0) \otimes (\Lambda \langle v_{12}, x_{35}\rangle, x_{35}\mapsto v_{12}^3)$. Both these models are formal; in particular, they show that once again Lemma~\ref{lemfo} applies.

\vspace{5mm}

As for the formality of $\E_7/\Sp(4)$ and $\E_7/\SO(8)$, it suffices to observe that both subgroups $\Sp(4)$ and $\SO(8)$ share one of their respective maximal tori with the subgroup~$\F_4 \In \E_7$ considered above. To see this for $\Sp(4)$, note that both subgroups $\Sp(4)$ and~$\F_4$ after conjugation, contain one and the same subgroup~$\Sp(3)\Sp(1)$ of full rank~\cite[Table~1]{Kol09}. Similarly, both the subgroups $\Sp(4)$ and $\SO(8)$, which are contained in~$\SU(8) \In \E_7$, after conjugation, contain one and the same subgroup~$\U(4)$ of full rank; see~\cite[Table~4]{BG08}.

\section{A criterion for equivariant formality}\label{sec04}

The results in this section can probably as well be deduced from the cited results in the literature. However, this is not completely obvious, as we use different perspectives and language which suggests the following direct and self-contained outline as a service to the reader. Notably, our methods reveal a beautiful parallelism of the criteria used for formality and for equivariant formality with very subtle differences---see Remark~\ref{remsim}.

We present a characterisation of the equivariant cohomology of the isotropy action which builds on the formality of $G/K$ and provides an additional condition to check (cf.~\cite[Theorem 1.4]{CF16}). (We observe that the arguments on top of \cite[Page~479]{CF16} and in the proof of \cite[Lemma 3.9]{CF16} seem to suggest a similar splitting of models, a trivialisation of differentials as we shall provide in the proofs below.)
We then invite the reader to compare it to our closely related formality criterion in Proposition~\ref{prop02} in view of Remark \ref{remsim}.

Note that $\pi^*(\B G)=\Hom(\pi_*(\B G),\qq)$ denote dual rational homotopy groups. The first inclusion expresses the fact that rational cohomology of $\B G$ is a polynomial algebra generated by spherical cohomology classes, i.e., by dual homotopy groups. The following theorem is included in~\cite[Proposition~3.10]{CF16}.

\begin{theo}\label{theo01}
Let $K \curvearrowright G/K$ be an isotropy action such that $G/K$ is formal. Suppose further that
\begin{align*}
\dim \ker\big(\pi^*(\B G)\hto{} H^*(\B G)\to H^*(\B K)\big)=\rk G-\rk K
\end{align*}
Then the isotropy action is equivariantly formal.
\end{theo}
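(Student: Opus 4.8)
The plan is to identify the equivariant cohomology $H^*_K(G/K)$ of the isotropy action with the cohomology of an explicit Sullivan model of the Borel fibration, and then to show that the hypothesis forces this model to collapse, so that $H^*_K(G/K)$ becomes a free $H^*(\B K)$-module of rank $\dim H^*(G/K)$; the latter is exactly equivariant formality (the Serre spectral sequence of the Borel fibration degenerates). Throughout I write $\rho := H^*(\B\phi)\colon H^*(\B G)\to H^*(\B K)$, put $H^*(\B G)=\qq[y_1,\ldots,y_n]$ with $n=\rk G$ and $H^*(\B K)=\qq[x_1,\ldots,x_m]$ with $m=\rk K$, and recall that $k=n-m$ is the corank.

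First I would exploit the homotopy pullback description $(G/K)_{hK}\simeq \B K\times_{\B G}\B K$ of the Borel construction. Base-changing the relative Sullivan model of $\B K\to\B G$ (which has the form $(H^*(\B G)\otimes\Lambda\langle \bar x_j, v_i\rangle,\dif)$ with $\dif v_i=\rho(y_i)(\bar x)-y_i$) along the second map $\rho$ produces a model of the Borel fibration over the equivariant base $H^*(\B K)=\qq[u_1,\ldots,u_m]$, namely
\begin{align*}
\big(\qq[u_1,\ldots,u_m]\otimes\qq[\bar x_1,\ldots,\bar x_m]\otimes\Lambda\langle v_1,\ldots,v_n\rangle,\dif\big),
\\
\dif v_i=\rho(y_i)(\bar x)-\rho(y_i)(u),
\end{align*}
whose fiber (set $u=0$) is precisely the model $(H^*(\B K)\otimes \Lambda V_G,\dif)$ of $G/K$ from Section \ref{sec02}. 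Using the hypothesis I may choose the generators so that $\rho(y_{m+1})=\cdots=\rho(y_n)=0$ span the $k$-dimensional kernel while $y_1,\ldots,y_m$ are complementary to it. Then $\dif v_i=0$ for $i>m$, so $v_{m+1},\ldots,v_n$ split off as $k$ free exterior generators, and the model factors as a Koszul-type complex on the $m$ elements $\rho(y_i)(\bar x)-\rho(y_i)(u)$ tensored with $\Lambda\langle v_{m+1},\ldots,v_n\rangle$.

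The heart of the argument, and the step I expect to be the main obstacle, is to show that $\rho(y_1),\ldots,\rho(y_m)$ form a regular sequence in $H^*(\B K)$. Here formality enters through Proposition \ref{prop02}: after splitting off the $k$ closed generators $v_{m+1},\ldots,v_n$, the remaining part of the model of $G/K$ is the Koszul complex on $\rho(y_1),\ldots,\rho(y_m)$ over $H^*(\B K)$, whose cohomology is a tensor factor of the finite-dimensional algebra $H^*(G/K)$ and hence itself finite-dimensional. But $m$ homogeneous elements in the $m$-dimensional Cohen--Macaulay ring $H^*(\B K)$ whose Koszul cohomology is finite-dimensional necessarily generate an ideal of finite colength, so they form a homogeneous system of parameters and therefore a regular sequence. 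This is exactly where the hypothesis is needed: one always has $\dim\ker\le k$, and without the equality $\dim\ker=k$ there would be more than $m$ nonzero images, forcing redundancy among them and producing free generators beyond the corank $k$, contradicting Proposition \ref{prop02}.

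Granting regularity, I would finish as follows. The $2m$ elements $\rho(y_i)(\bar x)-\rho(y_i)(u)$ cut out a subvariety of codimension $m$ in $\qq[u,\bar x]$, hence they are themselves a regular sequence, and the Koszul complex above has cohomology concentrated in degree zero, equal to $H^*(\B K)\otimes_{B} H^*(\B K)$ with $B=\qq[\rho(y_1),\ldots,\rho(y_m)]$. Since $H^*(\B K)$ is free of rank $N:=\dim\qq[x]/(\rho(y_i))$ over $B$, this is a free $H^*(\B K)$-module of rank $N$. Tensoring with $\Lambda\langle v_{m+1},\ldots,v_n\rangle$ then exhibits $H^*_K(G/K)$ as a free $H^*(\B K)$-module of rank $N\cdot 2^{k}=\dim H^*(G/K)$. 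Consequently the fiber restriction is surjective, the Borel fibration is totally non-homologous to zero, and the isotropy action is equivariantly formal.
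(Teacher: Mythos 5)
Your proof is correct, and it shares its skeleton with the paper's: both use the same biquotient-style model of the Borel construction (two copies of $H^*(\B K)$ tensored with the exterior algebra $H^*(G)$, with $\dif v_i=\rho(y_i)(\bar x)-\rho(y_i)(u)$), and both exploit the hypothesis in exactly the same way, namely to choose homogeneous polynomial generators $y_1,\ldots,y_n$ of $H^*(\B G)$ so that the last $k=\rk G-\rk K$ of them have image zero, whence $v_{m+1},\ldots,v_n$ are closed in the equivariant model and split off as free exterior generators. The difference lies in how surjectivity of the fibre restriction is then obtained. The paper argues structurally: it invokes Proposition \ref{prop02} (pure and formal implies positively elliptic tensor free), notes that under the hypothesis the splitting isomorphism may be taken to be the identity on generators, and checks surjectivity separately on the even part (one $H^*(\B K)$ factor maps onto the fibre's $H^*(\B K)$) and on the free odd part. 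You instead compute: $\rho(y_1),\ldots,\rho(y_m)$ form a homogeneous system of parameters, hence a regular sequence in the Cohen--Macaulay ring $H^*(\B K)$; the differences $\rho(y_i)(\bar x)-\rho(y_i)(u)$ then form a regular sequence in $\qq[u,\bar x]$, so the Koszul part of the equivariant model has cohomology $H^*(\B K)\otimes_B H^*(\B K)$, a free $H^*(\B K)$-module, and the fibre restriction (setting $u=0$) is visibly surjective. Your route is somewhat longer but yields more: an explicit description of $H^*_K(G/K)$ as a free module of the correct rank, and, notably, the observation that the formality hypothesis of the theorem is redundant---at the point where you cite Proposition \ref{prop02} you in fact only use finite-dimensionality of $H^*(G/K)$ (K\"unneth plus compactness of the manifold), so your argument derives formality of $G/K$ from the kernel condition rather than assuming it.

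Two blemishes, neither affecting validity. First, there are $m$, not $2m$, elements $\rho(y_i)(\bar x)-\rho(y_i)(u)$; they live in the $2m$-dimensional ring $\qq[u,\bar x]$, and your codimension count and regular-sequence conclusion stand as written. Second, your aside that failure of the equality $\dim\ker=k$ would ``contradict Proposition \ref{prop02}'' is false: for a formal homogeneous space the kernel can be strictly smaller than the corank. For instance, for $\SO(2n)/\SO(n)$ with $n$ even the Euler class and all Pontryagin classes of $\B\SO(2n)$ restrict nontrivially to $\B\SO(n)$, so the kernel is zero while the corank is $n/2$; this is exactly why the paper supplements Theorem \ref{theo01} by the refinement Corollary \ref{cor01}. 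Since your proof only uses the hypothesis in the forward direction, that aside can simply be deleted.
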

\begin{prf}
Under the given assumptions we show that the induced map on rational cohomology of the fibre inclusion of the fibration
\begin{align*}
G/K \hto{} \E K \times_K G/K \to \B K
\end{align*}
is surjective. We form a model of the total space as a relative model of this fibration, i.e., actually as a biquotient model~\cite{Kap}, \cite[Chapter 3.4.2]{FOT08}.
This yields the Sullivan algebra
\begin{align*}
(H^*(\B K) \otimes H^*(\B K) \otimes H^*(G),\dif)
\end{align*}
encoding the rational homotopy type of the Borel construction, with its cohomology being equivariant cohomology. The differential is given by
\begin{align}\label{eqndif}
\dif|_{H^*(\B K) \otimes H^*(\B K)}=0, \quad \dif(x)=-H^*(\B \phi_1) (x^{(+1)}) + H^*(\B \phi_2) (x^{(+1)})
\end{align}
where $\phi_i$ are formal copies of $\phi$, just with respect to the two different factors $H^*(\B K)$, and extended as a derivation where $x$ is a spherical cohomology class, i.e., an algebra generator of $H^*(G)$, and $\phi\co K\to G$ denotes the inclusion. Here, $x^{(+1)}$ again denotes the suspension, i.e., a degree shift by $+1$.

Recall that the algebra generators of $H^*(\B G)$ correspond to its rational homotopy groups; the same is true for $G$, and elements are just related by a degree shift. That is, the condition from the assertion guarantees that also in the model of the Borel construction, possibly up to isomorphism, there are $(\rk G-\rk K)$ generators of $H^*(G)$ mapping to zero under the differential.

Now by construction of the model of the total space as a (minimal) relative Sullivan model, the fibre inclusion is modelled by the projection
\begin{align}\label{eqnpro}
&(\Lambda (H^*(\B K) \otimes H^*(\B K) \otimes H^*(G)),\dif)\\ \nonumber\to~~ & (\Lambda (H^*(\B K) \otimes H^*(\B K) \otimes H^*(G)),\dif) \otimes_{H^*(\B K)} \qq
\end{align}
(where we divide by the base $H^*(\B K)$ encoding the action).

Recall the classical observation that, by construction, both the Sullivan model of the Borel construction and the model of $G/K$ are pure. Moreover, if such a space is formal, then a model of it is necessarily \emph{isomorphic} to the tensor product of one of a rationally elliptic space of positive Euler characteristic and a free algebra generated in odd degrees (see Proposition~\ref{prop02}).

Hence, since $G/K$ is formal, the cohomology of this positively elliptic factor is generated by its $H^*(\B K)$. Similarly, we observe that we have a subalgebra of equivariant cohomology generated by $H^*(\B K)\otimes H^*(\B K)$. The first factor comes from the action, the second one from the model of $G/K$. Hence the second factor maps onto its analogue $H^*(\B K)$ in the fibre model of~$G/K$. Thus, the fibre projection surjects onto the positively elliptic cohomology. It remains to see that it also surjects onto the free part generated in odd degrees.

By construction, this free factor for the model of the fibre is generated by $(\rk G-\rk K)$ odd-degree elements. The crucial observation which will finish the proof is that the \emph{isomorphism} which yielded the product splitting is actually the \emph{identity}.
Above we observed that there are $\rk G-\rk K$ generators which map to zero under the differential of the model of $G/K$. The fibre projection is modelled in \eqref{eqnpro}, i.e., it is the identity on $H^*(G)$ and its $H^*(\B K)$ and projects away the action factor $H^*(\B K)$. Now we invoke the special nature of the action, i.e., our concrete knowledge of the model of the Borel construction \eqref{eqndif}. Hence, the perturbation of the differential encoding the action is just the negative of the morphism induced by the inclusion $K\hto{} G$ encoding the homogeneous space. It follows that also in the model of the Borel construction the aforementioned $\rk G-\rk K$ generators are mapped to zero under its differential. Since the models are pure, it follows that the projection is surjective on the cohomology generated by these odd-degree elements.

Summarizing, we have seen the following: Since one $H^*(\B K)$ from the Borel construction surjects onto the $H^*(\B K)$ of the fibre (via the identity), it follows that the fibre projection is surjective on the cohomology generated by even degree elements. It is also surjective on the cohomology generated by odd-degree elements (and hence  surjective in total), since
\begin{align*}
(H^*(\B K) \otimes H^*(\B K) \otimes H^*(G),\dif)
\end{align*}
splits as a product of an algebra with cohomology generated by $H^*(\B K)\otimes H^*(\B K)$ and a free algebra generated by exactly the same $(\rk G-\rk K)$ generators as we find in the analogous splitting on 
$(H^*(\B K) \otimes H^*(\B K) \otimes H^*(G),\dif) \otimes_{H^*(\B K)} \qq$.

Consequently, in total, the fibre projection induces a surjective morphism on cohomology. Hence the isotropy action is equivariantly formal.
\end{prf}

Let us refine this criterion a little more in the form of a corollary. We mainly want to apply this modification in the situation when an element possibly does not map to zero but to a contractible algebra, which essentially does not make any difference. For this we consider the model $(\Lambda V,\bar\dif)=(H^*(\B K)\otimes H^*(G),\dif)$ of $G/K$. By construction, an element $v\in V^\odd$ represents a rational homotopy group of $G$. The differential $\bar \dif v$ is exactly the map in the assertion of the theorem up to the usual degree shift.
This connects the theorem to its corollary.
Theorem~\ref{theo01} thus is extended by
\begin{cor}\label{cor01}
Let $(G,K)$ be a pair such that $G/K$ is formal, and denote by $(\Lambda V,\bar\dif)$ the standard Sullivan model of $G/K$. Suppose there exists a homogeneous subspace $W\In V^\odd$ with
\begin{align*}
\dim W=\rk G-\rk K
\end{align*}
and with a complement $W'\In V^\odd$ of $W$ satisfying the property that
\begin{align*}
\im \bar\dif|_W\In \Lambda(\im \bar \dif|_{W'})
\end{align*}
Then the isotropy action is equivariantly formal.
\end{cor}
\begin{prf}
We adapt the proof of the theorem in the following way. We have already recalled how the considered map translates to the differential on $\Lambda V$. Thus the only difference between corollary and theorem is that in the corollary the map/the differential restricted to $V^\odd$ does not necessarily map to zero, but to a subspace already in the image of other elements. We use this property to deform the model in such a way that we reveal an actual subspace in the kernel. Consequently, the theorem applies literally.

We extend the model $(\Lambda V,\bar \dif)$ to the model \linebreak[4]$(H^*(\B K)\otimes H^*(\B K)\otimes H^*(G),\dif)$ of the Borel construction in the usual way. By assumption, and by the symmetry of the differential on the Borel construction with respect to the two $H^*(\B K)$ factors (up to multiplication with $-1$), we have that
$\dif v=\bar \dif^+ v - \bar \dif^- v$
for $v\in V^\odd=\pi^*(G)$, where $\bar \dif^+$ and $\bar \dif^-$ denote formal copies of $\bar \dif$ just with respect to the two different $H^*(\B K)$ factors. We choose a homogeneous basis of $W$. Hence, for each of its elements $v$, we have that
\begin{align*}
\bar \dif v=\sum_i \prod_j \bar \dif w_{i,j}
\end{align*}
for some $w_{i,j} \in W'$, and that
\begin{align*}
\dif v=\sum_i \prod_j \bar \dif^+ w_{i,j} -\sum_i \prod_j \bar \dif^- w_{i,j}
\end{align*}
We claim that this expression is already the image under $\dif$ of an element $v'$ in the ideal generated by $W'$. For this it suffices to observe that in a Sullivan algebra an element $a\cdot b$ is homologous to $a'\cdot b'$ given that $\dif a=\dif a'=\dif b=\dif b'=0$, that $a$ is homologous to $a'$, and that $b$ is homologous to $b'$. In fact, let $\dif x=a-a'$ and $\dif y=b-b'$, then (we can assume that $\deg a$, $\deg b$, $\deg a'$, $\deg b'$ are even)
\begin{align}\label{eqnnnn}
\dif(xb+a'y) =ab-a'b'
\end{align}
Indeed, now suppose that $\bar \dif v$ lies in the ideal generated by $\bar \dif W'$, i.e., up to linearity say $\bar \dif v=b\cdot a$ with $a\in \im \bar \dif|_{W'}$. Then $a-a'\in \im \dif|_{W'}$ and  $\dif v=ba-b'a'$. At this point it becomes clear why we need to strengthen our assumption compared to the one in Proposition \ref{prop02}. Without further knowledge, we cannot deduce that also $\dif v$ lies in the ideal generated by $W'$ such that we could perform a change of basis and assume that $\dif v=0$. Now, however, by assumption, also $b\in  \Lambda (\im \bar \dif|_{W'})$. Via Equation \eqref{eqnnnn} we can run an induction over the number of factors of the elements $b$ (starting from monomials with at most two factors and passing to elements of more factors) in order to see that by the induction assumption it is safe to assume that also $b-b'$ is in the image of $\dif$. Thus, in total, again via Equation \eqref{eqnnnn} we can (finish the induction and) deduce that $ba-b'a'\in \im \dif$. Thus we can assume that $\dif v=\dif v'$ with $v'\in I(W')$ in the ideal generated by $W'$. Hence we can change the basis by the automorphism induced by $v\mapsto \tilde v+v'$ where $\tilde v:=v-v'$ and assume that $\dif v=0$ by replacing $v$ with $\tilde v$. We continue to do so for all the basis elements $v$. Hence we may apply the theorem. This finishes the proof.
%
%

%
%
%
%
\end{prf}

\begin{rem}\label{remsim}
It is a beautiful observation that the difference between Proposition~\ref{prop02} and Corollary~\ref{cor01}, i.e., between our sufficient criteria for formality respectively for equivariant formality is that in the first case it is sufficient that a $\rk G-\rk K$ dimensional subspace maps into the \emph{ideal} generated by the image of a complement under the differential (then a similar automorphism of the model shows that the differential becomes trivial on a $\rk G-\rk K$ dimensional subspace), in the second case it needs to map to the \emph{subalgebra} generated by the image of the differential of a complement.
\end{rem}

In the following corollary we apply Corollary~\ref{cor01} in the case when in the model of $G/K$ the subalgebra $H^*(\B K)$ is contractible. Hence $\rk G- \rk H$ generators of $V$ (due to the pureness of the model) necessarily map into this contractible algebra. Extending these generators by a basis of $H^*(\B K)$ allows us to apply the previous corollary.
\begin{cor}\label{cor02}
If $H^*(G/K)$ is a free commutative algebra generated in odd degrees, then the isotropy action is equivariantly formal.
\end{cor}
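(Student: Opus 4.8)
The plan is to deduce the statement from the previous corollary by recognising that the hypothesis ``$H^*(G/K)$ is a free algebra'' is precisely the situation in which the subalgebra $H^*(\B K)$ becomes acyclic (contractible) inside the standard model $(H^*(\B K)\otimes \Lambda V,\dif)$. First I would record two preliminary reductions. Since $G/K$ is a compact manifold its rational cohomology is finite-dimensional, so a \emph{free} $H^*(G/K)$ must be an exterior algebra on finitely many \emph{odd} generators; such an algebra is its own minimal model with vanishing differential, whence $G/K$ is (intrinsically) formal and the formality hypothesis of Corollary \ref{cor01} is satisfied. Feeding this into Proposition \ref{prop02} forces the positively elliptic factor of the pure model to be trivial---its cohomology sits in even degrees while the total cohomology is exterior on odd generators---so that $H^*(G/K)\cong (\Lambda\langle x_1,\dots,x_k\rangle,0)$ with exactly $k=\rk G-\rk K$ odd generators.

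Next I would translate freeness into the behaviour of the restriction $\rho:=H^*(\B\phi)\co H^*(\B G)\to H^*(\B K)$, where $\phi\co K\hto{} G$. Classically, freeness of $H^*(G/K)$ is equivalent to surjectivity of $\rho$. In the Koszul-type model $(H^*(\B K)\otimes\Lambda V,\dif)$ with $\dif v=\rho(v^{(+1)})$, surjectivity lets one single out $\rk K$ of the transgressive generators whose images form a regular sequence generating the augmentation ideal of $H^*(\B K)$; these split off a contractible tensor factor and thereby render $H^*(\B K)$ acyclic within the model. The remaining $k=\rk G-\rk K$ generators then have vanishing \emph{indecomposable} transgression, so that each of their differentials lies in the ideal of $H^*(\B K)$ generated by that regular sequence.

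Finally I would invoke Corollary \ref{cor01}. Choosing a homogeneous basis $(v_j)$ of $V^{(+1)}$ that lists the $\rk K$ regular-sequence generators before the $\rk G-\rk K$ remaining ones, each of the latter satisfies $\psi(v_j)\in H^*(\B K)\cdot\Lambda\langle\psi(v_s)\rangle_{s<j}$, since its transgression is a polynomial in the already-listed regular-sequence transgressions. This is exactly the redundancy condition of Corollary \ref{cor01} for the relevant $\rk G-\rk K$ generators, and equivariant formality follows. I expect the main obstacle to be the bookkeeping of this last step: the transgressions of the surviving generators are in general \emph{nonzero decomposables} (as already visible in the case $\Sp(2n)/\Sp(n)$ of Section \ref{sec03}), becoming trivial only after the contractible factor has been split off. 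One must therefore verify carefully that the chosen ordering makes each such transgression genuinely redundant relative to its predecessors---equivalently, that the decomposable corrections setting these differentials to zero in the full Borel model are legitimate. This is precisely the refinement that Corollary \ref{cor01} was designed to absorb, so the verification amounts to matching the contractibility of $H^*(\B K)$ to the hypotheses of that corollary.
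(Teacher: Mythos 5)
Your proposal is correct and takes essentially the same route as the paper: intrinsic formality of a space with free cohomology, the identification (via the homotopy Euler characteristic, i.e.\ Proposition \ref{prop02}) of the number of free generators with $\rk G-\rk K$, and the observation that freeness forces $H^*(\B G)\to H^*(\B K)$ to be surjective, so that $\rk K$ of the generators transgress onto a regular sequence generating the augmentation ideal of $H^*(\B K)$ while the remaining $\rk G-\rk K$ generators become redundant in the sense of Corollary \ref{cor01}. The paper's own proof is just a terser version of this argument, with the contractibility of $H^*(\B K)$ inside the model and the choice of ordered basis described in the paragraph preceding the corollary.
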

\begin{prf}
Such a space is intrinsically formal. 
Since the model is pure, the differential of the odd-degree generators maps to $H^*(\B K)$. Since cohomology is generated in odd degrees, it follows that there exists a minimal $W'\In \pi^*(G)$ with $\im \dif|_{W'}=\pi^*(\B K)$ admitting a homogeneous complement $W$ with $\dif W\in \Lambda (\pi^*(\B K))=H^*(\B K)$ and satisfying $\dim W=\rk G-\rk K$. Hence Corollary~\ref{cor01} applies to $W$, $W'$.
\end{prf}
This originally was expressed by Shiga in the form that the isotropy action is equivariantly formal if $H^*(G) \to H^*(K)$ is surjective.

\vspace{5mm}

Our subsequent discussion of equivariant formality will draw on the criterion as presented in the corollary together with the next simple observation.
The following proposition is also well-known (see \cite[Proposition C.26, p.~207]{GGK02})
and follows basically from the general form of the following commutative diagram (which we already specialise to our concrete case) with horizontal and vertical fibrations.
\begin{align*}
\xymatrix{
\ast \ar@{^{(}->}[r]\ar@{^{(}->}[d] & K/T \ar[r]\ar@{^{(}->}[d] & K/T\ar@{^{(}->}[d]\\
G/K\ar@{^{(}->}[r]\ar[d] &\E T \times_T G/K\ar[r]\ar[d] & \B T \ar[d]\\
G/K\ar@{^{(}->}[r] &\E K \times_K G/K \ar[r] & \B K
}
\end{align*}

\begin{prop}\label{prop01}
If a compact connected Lie group $G$ acts on a compact manifold $M$, then its action is equivariantly formal if and only if so is the induced action of its maximal torus $T$.
\end{prop}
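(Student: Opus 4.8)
The plan is to exploit the commutative diagram of fibrations presented just above the proposition statement, which compares the Borel constructions for the full group $G$ (in its guise as the maximal-torus Borel construction, arising from the fibration $\E T\times_T G/K\to \E K\times_K G/K$ with fibre $K/T$) against those for the maximal torus $T$. The key homotopy-theoretic input is that the fibre $K/T$ of the vertical fibrations is a rationally formal, positively elliptic space whose rational cohomology is concentrated in even degrees; indeed $H^*(K/T;\qq)$ is the quotient of $H^*(\B T)$ by the ideal generated by the positive-degree invariants, and in particular it has Euler characteristic equal to $|\W|>0$ and satisfies the Halperin conjecture. First I would recall that equivariant formality of the $G$-action means that $H^*_G(M;\qq)=H^*(\E G\times_G M;\qq)\to H^*(M;\qq)$ is surjective, and that for the compact connected group $G$ one has $H^*_G(M)\cong H^*_T(M)^{\W}$, the Weyl-group invariants of the $T$-equivariant cohomology (since $\B T\to \B G$ is a rational fibration with fibre $G/T$, and rationally $H^*(\B G)=H^*(\B T)^{\W}$).

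The heart of the argument is the following. Consider the right-hand vertical fibration $\B T\to \B K$ with fibre $K/T$, and the middle vertical fibration $\E T\times_T M\to \E K\times_K M$, again with fibre $K/T$. Because $K/T$ has cohomology concentrated in even degrees and satisfies Halperin's conjecture, both of these fibrations are \emph{totally non-homologous to zero}: the Leray--Hirsch-type argument (or equivalently the vanishing of negative-degree derivations of $H^*(K/T)$, cf.~the citation of \cite[Theorem A]{AK12} used earlier in the excerpt) shows that $H^*_T(M)$ is free as a module over $H^*(\B K)$-via-$H^*(\B T)$, and that $H^*_G(M)=H^*_T(M)^{\W}$ splits off $H^*_T(M)$ as the invariant summand. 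The upshot is a commutative square in which the horizontal fibre-restriction maps to $H^*(M)$ for the $G$-Borel construction and for the $T$-Borel construction differ only by passage to Weyl-group invariants on the equivariant side, while the target $H^*(M;\qq)$ and the map induced by $\W$ on it is trivial (the $\W$-action on $H^*(M)$ coming from $K/T$ is trivial, since $M$ sits as the base fibre).

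Concretely, the two implications go as follows. If the $T$-action is equivariantly formal, then $H^*_T(M)\to H^*(M)$ is surjective; restricting to the $\W$-invariant part $H^*_T(M)^{\W}=H^*_G(M)$ and using that the $\W$-action on the image $H^*(M)$ is trivial together with the fact that rationally averaging over $\W$ is a projection onto invariants, one concludes that $H^*_G(M)\to H^*(M)$ is already surjective, giving equivariant formality of the $G$-action. Conversely, equivariant formality of $G$ forces surjectivity of $H^*_G(M)\to H^*(M)$; chasing the diagram through the inclusion $H^*_G(M)=H^*_T(M)^{\W}\hookrightarrow H^*_T(M)$, whose composite with $H^*_T(M)\to H^*(M)$ equals the $G$-fibre map (the left square commutes), one sees immediately that $H^*_T(M)\to H^*(M)$ is surjective as well, i.e.~the $T$-action is equivariantly formal.

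The step I expect to be the main obstacle is justifying cleanly that the vertical fibrations with fibre $K/T$ are totally non-homologous to zero and, in tandem, that rationally $H^*_G(M)=H^*_T(M)^{\W}$ with the $\W$-action restricting trivially to the fibre $H^*(M)$. This is where the positive ellipticity of $K/T$ and the Halperin property are essential, and one must be careful that the module-freeness over $H^*(\B T)$ interacts correctly with the $\W$-action so that taking invariants commutes with the relevant restriction maps. Once this Leray--Hirsch/invariant-theory bookkeeping is in place, the two surjectivity implications are a direct diagram chase, and the proposition follows; this is exactly the content attributed to \cite[Proposition C.26]{GGK02}.
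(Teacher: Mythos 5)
Your proposal is correct and takes essentially the same route as the paper, which offers no detailed argument of its own but cites exactly \cite[Proposition C.26]{GGK02} together with the displayed commutative diagram of fibrations; your reconstruction via $H^*_G(M)\cong H^*_T(M)^{\W}$, Weyl-averaging, and the factorisation of the fibre restriction through $H^*_T(M)$ is precisely the standard argument behind that citation (the converse direction being the trivial factorisation). One small repair: the needed identity $i^*\circ w^*=i^*$ for the fibre restriction $i^*\co H^*_T(M)\to H^*(M)$ does not follow from ``$M$ sitting as the base fibre'' but from the fact that representatives of Weyl-group elements lie in the connected acting group and hence act on $M$ by maps homotopic to the identity --- with this in place the averaging argument closes, and the Leray--Hirsch/Halperin discussion of the vertical $K/T$-fibrations becomes superfluous.
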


Again, we obtain an analogue of Corollary~\ref{cor03} (see \cite[Theorem 1.1]{Car19}, \cite[Proposition~3.5(iii)]{GN16}).

\begin{cor}\label{cor04}
Suppose that the maximal tori $T_K\In K$ and $T_{K'}\In K'$ are conjugate in $G$. Then the isotropy action $K\curvearrowright G/K$ is equivariantly formal if and only if the isotropy action $K'\curvearrowright G/K'$ is equivariantly formal.
\end{cor}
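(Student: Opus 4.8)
The plan is to reduce, via Proposition \ref{prop01}, to the maximal tori $T_K\In K$ and $T_{K'}\In K'$, and then to show that equivariant formality of the isotropy (left-multiplication) action of a torus $T=T_K$ on $G/K$ depends only on the conjugacy class of $T$ in $G$ --- in complete analogy with how Corollary \ref{cor03} reduced formality to the diffeomorphism type of $G/T_H$. By Proposition \ref{prop01} the action $K\curvearrowright G/K$ is equivariantly formal if and only if $T_K\curvearrowright G/K$ is, and likewise for $K'$; so it suffices to compare the two torus actions.

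The key step is to replace $G/K$ by the flag-type quotient $G/T_K$ without changing the acting torus. The projection $\pi\co G/T_K\to G/K$ is a fibre bundle with fibre $K/T_K$ and is equivariant for the left $T_K$-actions; applying the Borel construction $\E T_K\times_{T_K}(-)$ yields a fibre bundle $\bar\pi$ with the same fibre $K/T_K$ over $\E T_K\times_{T_K}G/K$. Since $K/T_K$ satisfies the Halperin conjecture (the same input used above for the fibration $H/T_H\hto{}G/T_H\to G/H$), both $\pi$ and $\bar\pi$ are totally non-homologous to zero, so Leray--Hirsch provides splittings $H^*(G/T_K)\cong H^*(G/K)\otimes H^*(K/T_K)$ and $H^*_{T_K}(G/T_K)\cong H^*_{T_K}(G/K)\otimes H^*(K/T_K)$. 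Choosing the Leray--Hirsch classes on the total space to restrict, along $K/T_K\In G/T_K\In \E T_K\times_{T_K}G/T_K$, to a fixed basis of $H^*(K/T_K)$ makes the two splittings compatible, so that the fibre-restriction map $H^*_{T_K}(G/T_K)\to H^*(G/T_K)$ becomes $r\otimes\id$, where $r\co H^*_{T_K}(G/K)\to H^*(G/K)$ is the fibre-restriction for the $G/K$ Borel fibration. Hence $r$ is surjective if and only if $r\otimes\id$ is, i.e.~$T_K\curvearrowright G/K$ is equivariantly formal if and only if $T_K\curvearrowright G/T_K$ is.

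Finally I would invoke the hypothesis: if $T_{K'}=gT_Kg^{-1}$, then $xT_K\mapsto gxg^{-1}T_{K'}$ is a well-defined diffeomorphism $G/T_K\to G/T_{K'}$ intertwining the left $T_K$-action with the left $T_{K'}$-action via $t\mapsto gtg^{-1}$. Thus the isotropy actions $T_K\curvearrowright G/T_K$ and $T_{K'}\curvearrowright G/T_{K'}$ are isomorphic as group actions, so one is equivariantly formal precisely when the other is. Chaining the equivalences $K\curvearrowright G/K\iff T_K\curvearrowright G/K\iff T_K\curvearrowright G/T_K\iff T_{K'}\curvearrowright G/T_{K'}\iff T_{K'}\curvearrowright G/K'\iff K'\curvearrowright G/K'$ gives the claim.

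The main obstacle is the compatibility of the two Leray--Hirsch splittings in the middle step: one must verify that the section realising $H^*(K/T_K)$ inside the equivariant cohomology can be chosen to restrict correctly on the fibre over the basepoint of $\B T_K$, so that fibre-restriction is genuinely $r\otimes\id$ rather than merely an abstract isomorphism matching ranks. This is where the Halperin property of $K/T_K$ is essential, and it is the analogue, at the level of equivariant cohomology, of the formality transfer supplied by \cite[Theorem A]{AK12} in the proof underlying Corollary \ref{cor03}.
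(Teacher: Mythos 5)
Your proof is correct, and its skeleton matches the paper's at the two ends: both reduce the acting groups to their maximal tori via Proposition \ref{prop01}, and both conclude by conjugating the resulting torus actions (your explicit intertwining diffeomorphism $xT_K\mapsto gxg^{-1}T_{K'}$ is exactly what the paper's conjugation step amounts to). The genuine difference lies in the middle step, where one must replace the space $G/K$ by one depending only on $(G,T_K)$. You prove the equivalence between equivariant formality of $T_K\curvearrowright G/K$ and of $T_K\curvearrowright G/T_K$ directly, by running the Leray--Hirsch/Halperin argument on the bundle $K/T_K\hto{} G/T_K\to G/K$ \emph{and} on its Borel-construction counterpart, and checking that compatible choices of Leray--Hirsch classes turn the fibre restriction for $G/T_K$ into $r\otimes\id$; this verification is sound (the classes exist because the Borel-level bundle is totally non-homologous to zero, their restrictions to $G/T_K$ serve as Leray--Hirsch classes downstairs, and the commuting square of fibre inclusions over $\B T_K$ gives the tensor-product description, so surjectivity passes both ways by comparing coefficients of $1$). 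The paper instead exploits the symmetry between left and right actions: it converts the problem into one about the right action of $K$ on $T_K\backslash G$, applies Proposition \ref{prop01} a second time (in right-handed form) to replace that $K$ by $T_K$, and only then conjugates and translates back to $T_{K'}\curvearrowright G/T_{K'}$. What the paper's route buys is brevity, since it reuses Proposition \ref{prop01} rather than re-running a Leray--Hirsch argument; but the swap of the roles of acting group and quotiented group is precisely the step it treats tersely (cf.\ its pointer to \cite[Theorem 2.3.2]{CF16}). What your route buys is that this only non-formal transfer is made explicit and self-contained; your closing paragraph correctly identifies the compatibility of the two splittings as the crux, and your argument does close it.
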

\begin{prf}
We first recall that $K\curvearrowright G/K$ is equivariantly formal if and only if so is $T_K\curvearrowright G/K$. Now we use the symmetry between left action and right action in order to deduce that the right action of $K$ on the homogeneous space $T_K\backslash G$ is equivariantly formal if and only if so is the right action $T_K\backslash G \curvearrowleft T_K$. Indeed, we have the following equivalences
\begin{align*}
&\textrm{$K$ acts equivariantly formally on $G/K$ from the left.}\\
\iff&\textrm{$H^*(K\setminus G/K)$ surjects onto $H^*(G/K)$.}\\
\iff&\textrm{$H^*(K\setminus G/K)$ surjects onto $H^*(K\setminus G)$.}\\
\iff&\textrm{$H^*(T_K\setminus G/K)$ surjects onto $H^*(T_K\setminus G)$.}\\
\iff&\textrm{$K$ acts equivariantly formally from the right on $T_K\setminus G$.}\\
\iff&\textrm{$T_K$ acts equivariantly formally from the right on $T_K\setminus G$.}
\end{align*}
In the second equivalence we just switched sides, i.e., we use the same morphism and just exchange the $H^*(\B K)$-factors in the model. This only affects the differential by $-1$ and has no effect on surjectivity.

Only the third equivalence needs some more arguments. We have the obvious commutative diagram
\begin{align*}
\xymatrix
{1 \ar[r]\ar[d] & K/T_K \ar[d]_<<<<<<{} \ar[r]^<<<<<<{} & K/T_K \ar[d]\\
  K\ar[r]\ar[d]& T_K\setminus G \ar[r]\ar[d]_<<<<<<{}  & T_K\setminus G/K \ar[d] \\
  K \ar[r]&K\setminus G \ar[r]& K\setminus G/K
}
\end{align*}
in which all maps are fibrations or fibre inclusions. All the three lower vertical morphisms induce injective morphisms in rational cohomology, since as vector spaces the cohomologies of the spaces in the middle row split as tensor products of the cohomologies of the  corresponding spaces in the top and the bottom row---which clearly follows from a classical spectral sequence arguments using that $\rk T_K=\rk K$ whence $H^\odd(K/T_K)=0$. It follows that the morphism induced by the middle horizontal fibration
\begin{align*}
& H^*(T_K\setminus G/K)\cong H^*(K\setminus G/K)\otimes  H^*(K/T_K) \\ \to &H^*(T_K\setminus G)\cong H^*(K\setminus G) \otimes H^*(K/T_K)
\end{align*}
is surjective if and only if so is the one in the bottom row $H^*(K\setminus G/K)\to H^*(K\setminus G)$.

Finally, since $T_K$ and $T_{K'}$ are conjugate, the action $T_K \setminus G \curvearrowleft T_K$ is equivalent to $ T_{K'}\backslash G \curvearrowleft T_{K'}$. Then we may go all the way backwards through the equivalences and see that the equivariant formality of the action is equivalent to the one of $K'\curvearrowright G/K'$.
\end{prf}

\section{Equivariant formality of ($\zz_2\oplus \zz_2$)-symmetric spaces}\label{sec05}

We now reduce to the case of a simple numerator group $G$ by first dealing with the Cases \eqref{DecItemTorus},  \eqref{DecItemTwo}, \eqref{DecItemTwoZ}, \eqref{DecItemFour} from Proposition~\ref{prop03}, where $G$ is non-simple. For these it remains to make the following observations: In Cases \eqref{DecItemTorus},  \eqref{DecItemTwo}, \eqref{DecItemFour} both minimal model and cohomology are given by an exterior algebra, and we can cite Corollary~\ref{cor02}. In any of the four cases, as we observed in Section~\ref{secfrt}, the model is of the form $(\Lambda \langle x_1,\ldots , x_l\rangle,0) \otimes (A,\dif)$ (with $\deg x_i$ odd, and with $A$ only possibly non-trivial in \eqref{DecItemTwoZ}).

If $A$ is not trivial, the algebra $(A,\dif)$ is a model of a symmetric space, and equivariant formality in our case follows from the equivariant formality of the isotropy action on symmetric spaces, as the isotropy action restricts accordingly.

\vspace{5mm}

Let us recall a few easy observations:  Equal rank homogeneous spaces have equivariantly formal isotropy actions (as the $\E_2$-page of the Leray--Serre spectral sequence associated to the Borel fibration is concentrated in even total degrees only; even more generally, since homogeneous spaces satisfy the Halperin conjecture). So do symmetric spaces (see \cite{Goe12}). Combining all this with Proposition~\ref{prop01}, it suffices, in view of Corollary~\ref{cor04}, to focus on the very same list of remaining cases we gave in Subsection~\ref{subsec01} for the formality issue.

Having proved the formality of $(\zz_2\oplus \zz_2)$-symmetric spaces, in view of Theorem~\ref{theo01} it remains to check the induced map on cohomology algebra generators
\begin{align*}
\pi^*(\B G)\otimes \qq\hto{} H^*(\B G)\to H^*(\B K)
\end{align*}
and the dimension of its kernel. Moreover, we may again restrict to only dealing with one representative out of those spaces whose stabilizer groups share maximal tori.

Moreover, we may refer to Subsections~\ref{subsec02}, \ref{subsec03}, \ref{subsec04}, \ref{subsec05},
where we already did this analysis. Basically, we can finish the proof by stating that in any of these cases what we checked in order to apply Lemma~\ref{lemfo} is just good enough to directly allow for an application of Corollary~\ref{cor01}.

Let us quickly recall the crucial observations from there:

\begin{itemize}
\item The spaces $\Sp(2n)/\Sp(n)$, $\SU(2n)/\SU(n)$, $\SO(2n)/\SO(n)$, \linebreak[4]$\SO(4n)/\Sp(n)$,  $\Sp(n)/\SO(n)$ all came from block inclusions (see Section~\ref{subsec02}). In particular, we observed that in nearly all cases these spaces had a free commutative cohomology algebra (generated in odd degrees). Hence, due to Corollary~\ref{cor02} the isotropy action is equivariantly formal.

    It only remains to consider the two exceptions, namely, the spaces $\SO(2n)/\SO(n)$ from Case 3 of the formality reasoning and $\Sp(n)/\SO(n)$ from Case 5, both with $n$ even. In these cases the cohomology of $G/K$ splits as the product of the cohomology of an even-dimensional sphere and odd-dimensional spheres. More precisely, we identified the model as
\begin{align*}
(\Lambda (V_{\B \SO(n)} \oplus V_{\SO(2n)}),\dif)\simeq (\Lambda \langle x_{n}, v'_{2n-1}, v_{2n-1}, v_{2n+3}, \ldots, v_{4n-5}\rangle,\dif)
\end{align*}
with $\dif(v'_{2n-1})=x_n^2$ and $\dif$ vanishing on the other generators.
Indeed, as for the standard model we observed that the elements $v_3, \ldots, v_{2n-5}$ map into and contract $\qq[x'_4, \ldots, x'_{2n-4}]$ under $\dif$, where $x'_i$ denotes the elementary symmetric polynomial in the squares $t_j^2$ of the formal roots $t_j$ in degree $i$. Since the $v_{2n-1},\ldots, v_{4n-5}$ are mapped to classes corresponding to invariant polynomials in the \emph{squares} of the formal roots, namely in the $t_i^2$, it follows that they are mapped into
\begin{align*}
\Lambda \langle x_n^2,x_4', \ldots, x_{2n-4}'\rangle=\Lambda \langle \dif v'_{2n-1}, \dif v_3, \ldots, \dif v_{2n-5}\rangle
\end{align*}
Hence, in order to deduce equivariant formality, we can apply Corollary~\ref{cor01} with $W'=\langle v'_{2n-1}, v_3, \ldots, v_{2n-5}\rangle$ and with \linebreak[4]$W=\langle v_{2n-1},\ldots, v_{4n-5}\rangle$ satisfying $\dim W=\rk G-\rk K=n/2$.

\vspace{5mm}

The situation for $\Sp(n)/\SO(n)$ with $n$ even is similar. Mimicking the notation from the last case, the model is generated by the elements $x_4',x_8',\ldots, x_{2n-4}',x_n$ and $v_3,v_7, \ldots, v_{4n-1}$. We have seen that the differential maps $v_{2n+3},\ldots, v_{4n-1}$ to zero.
These span a vector space of dimension $\rk G-\rk K=n/2$. Thus via Theorem~\ref{theo01} we directly
deduce equivariant formality.

\item In the case of the space $\SU(2n+2m+2)/\SO(2n+1)\times \SO(2m+1)$, we observed in Section~\ref{subsec03} that its standard model is of the form
     \begin{align*}
     &(\Lambda (V_{\B \SO(2n+1)}\oplus V_{\B \SO(2m+1)} \oplus V_{\SU(2n+2m+2)}),\dif)
     \\ = & (\Lambda (V_{\B \SO(2n+1)}\oplus V_{\B \SO(2m+1)} \oplus \langle v_3, v_7, v_{11}, \ldots, v_{2(2n+2m+2)-5}\rangle),\dif)\\ & \otimes \Lambda \langle v_5, v_9, \ldots, v_{2(2n+2m+2)-3}, v_{2(2n+2m+2)-1}\rangle,0)
     \end{align*}
     Thus the differential vanishes on the $v_i$ of degree congruent to $1$ modulo $4$ and additionally on the top degree $v_{2(2n+2m+2)-1}$; in total, on
     \begin{align*}
     ((2(2n&+2m+2)-3)-1)/4+1\\
     =~& n+m+1\\
     =~&(2n+2m+1)-(2n)/2-(2m)/2\\
     =~&\rk G-\rk K
     \end{align*}
     generators. Thus we apply Theorem~\ref{theo01} to deduce the result.
\item The spaces $\SO(a+b+c)/\SO(a)\SO(b)\SO(c)$ and $\SO(a+b+c+d)/\SO(a)\SO(b)\SO(c)\SO(d)$ (with either three or all four of the $a,b,c,d$ odd) result from standard block inclusion. In Section~\ref{subsec04} we observed that in the models of these three cases, the top, top two, and top odd-degree generators (respectively) map to zero under the differential, and accordingly, the coranks of the groups are $1$, $2$, $1$. Thus we invoke Theorem~\ref{theo01} to deduce equivariant formality.
\item The space $\E_7/\F_4$. (In Section \ref{subsec05} we observed that $\F_4$ shares a maximal torus with $\Sp(4)$ and $\SO(8)$; hence the subsequent arguments also cover the spaces $\E_7/\Sp(4)$ and $\E_7/\SO(8)$.)  The cohomology of $\E_7$ is generated in degrees $3, 11, 15, 19, 23, 27, 35$, the one of $\F_4$ in degrees $3, 11, 15, 23$. Extending the discussion of this case from above, we observe that either the generators in degree $19$, $27$ and $35$ map into the ideal generated by a contractible algebra (generated by those in degree $3$ and $4$, $11$ and $12$, $15$ and $16$, $23$ and $24$), or there are cohomology generators in degrees $11$, $12$.

    From \cite[Table 1, p.~294]{Pic16} we have the Poincar\'e polynomials of the symmetric spaces $\E_7/\E_6\U(1)$ and $\E_6/\F_4$. We draw two observations from these. First, $\pi^{12}(\B \E_7)\to \pi^{12}(\B \E_6)$ is an isomorphism, and second, so is $\pi^{12}(\B \E_6)\to \pi^{12}(\B \F_4)$. This lets us deduce that the differential on the underlying vector space of the model of $\E_7/\F_4$ is an isomorphism from degree $11$ to degree $12$, and we are in the first case.
It follows that the cohomology algebra is a free algebra generated in odd degrees, and we conclude by applying Corollary~\ref{cor02}.
\end{itemize}

\section{A short alternative proof of the equivariant formality of the isotropy action on symmetric spaces}\label{sec06}

We provide a short argument for the equivariant formality of symmetric spaces building upon their classification together with the classical fact that they are (geometrically) formal.

It is a short and classical argument to prove that compact Lie groups, considered as symmetric spaces $(G\times G)/\Delta G$ (with the diagonal inclusion $\Delta G$) have equivariantly formal isotropy actions. In this case, the quotient $G\cong (G\times G)/\Delta G$ is obviously formal, and, rationally, the fibre projection in the Borel fibration is just
\begin{align*}
(H^*(\B G)\otimes H^*(\B G) \otimes H^*(G)\otimes H^*(G),\dif) \to H^*(G),
\end{align*}
where the differential is such that the cohomology of the anti-diagonal $\{(g,-g) \mid g\in G\}$ in $G\times G$ projects surjectively onto $H^*(G)$.

\vspace{5mm}

Since product actions are equivariantly formal if and only if their factors are, and since homogeneous spaces $G/K$ with $\rk G=\rk K$ have equivariantly formal isotropy actions, it remains to prove the equivariant formality of the isotropy actions of those irreducible symmetric spaces with $\rk K<\rk G$. These then come out of a short list:
\begin{itemize}
\item The spaces $\SU(n)/\SO(n)$. Let us make a distinction by the parity of $n$. Suppose first that $n$ is odd. The algebra $H^*(\B\SU(n))$ is concentrated in even degrees $4, 6, \ldots, 2n$ whereas $H^*(\B \SO(n))$ ($n$ odd) is concentrated in degree divisible by four, namely $4,8, \ldots, 2n-2$. Hence, merely for degree reasons, exactly half, i.e., $(n-1)/2$, of the generators of $H^*(\B \SU(n))$, namely the ones in degrees $6,10,\ldots, 2n-4, 2n$ map to zero under the morphism induced by the group inclusion. Since the corank of $\SO(n)$ in $\SU(n)$ is exactly $(n-1)-(n-1)/2=(n-1)/2$ we deduce that the isotropy action is equivariantly formal by Theorem~\ref{theo01}.

Suppose now that $n$ is even. We repeat the argument from $n$ odd observing that now $H^*(\B \SO(n))$ is generated in degrees $4,8, \ldots, 2n-4$ with an additional generator in degree $n$. If $n$ is divisible by four, exactly the same arguments as above apply: For degree reasons the generators in degrees $6,10, \ldots, 2n-2$ map to zero; these are $n/2-1=(n-1)-n/2=\rk \SU(n)-\rk \SO(n)$.

If $n \equiv 2 \mod 4$, we need to observe that the additional generator in degree $n$ of $H^*(\B \SO(n))$, namely the one corresponding to $t_1\cdots t_{n/2}$, might potentially be hit by the generator of $H^*(\B \SU(n))$ in this degree, namely the element corresponding to the $n/2$-th elementary symmetric polynomial $t_1\cdots t_{n/2}+\dots + t_{n/2+1}\cdots t_n$, in the formal roots, i.e., in the algebra generators of the cohomology of the classifying space of its maximal torus. However, recall that $t_1\mapsto t_1$, $t_2\mapsto -t_1$, $t_3\mapsto t_2$, $t_4\mapsto -t_2$, etc. Consequently, every summand of $t_1\cdots t_{n/2}+\dots + t_{n/2+1}\cdots t_n$ which maps to $t_1\cdots t_{n/2}$ drags along another summand which maps to its negative. So also here the induced morphism vanishes.

Moreover, we observe that the Chern classes of $H^*(\B\SU(n))$ of degrees $n+4, n+8,\ldots, 2n-2$ map to zero, which, for degree reasons is equivalent to saying that they do not map into the ideal generated by $t_1\cdots t_{n/2}$. This can be deduced as follows: Any element in this ideal (not in degrees $n,2n$) consists of summands which contain third powers of some $t_i$. In contrast, the image of any Chern class can only contain squares of the $t_i$ at most. (In degree $2n$ we observe that the class corresponding to $t_1\cdots t_n$ does map to the one corresponding to $-(t_1\cdots t_{n/2})^2$.)
In summary, the Chern classes in degrees $6,\ldots,n, n+4, \ldots, 2n-2$ map to zero. These are $n/2-1$, and $\rk G-\rk K=n/2-1$. Hence Theorem~\ref{theo01} applies as above.

\item The spaces $\SU(2n)/\Sp(n)$. The argument is basically the same as for $n$ odd in the previous case: The algebra $H^*(\B\SU(2n))$ is concentrated in even degrees $4, 6, \ldots, 4n$ whereas $H^*(\B \Sp(n))$ is concentrated in degree divisible by four, namely $4,8, \ldots, 4n$. Hence, again merely for degree reasons $n-1$ of the generators of $H^*(\B \SU(n))$, namely the ones in degrees $6,10,\ldots, 4n-2$ map to zero under the morphism induced by the group inclusion. Since the corank of $\Sp(n)$ in $\SU(2n)$ is exactly $n-1$, we deduce that the isotropy action is equivariantly formal by Theorem~\ref{theo01}.

\item The spaces $\SO(p+q)/\SO(p)\times \SO(q)$ with both $p,q$ odd. The stabiliser inclusion is blockwise. The subgroup has corank $1$. Hence it suffices to observe that the $(p+q)/2$-th elementary symmetric polynomial in the $t_i^2$, the formal roots of $H^*(\B \SO(p+q))$, the Pontryagin class corresponding to $t_1\cdots t_{(p+q)/2}$ maps to zero under the map induced by the inclusion on the cohomology of classifying spaces. This follows from the fact that this map is induced by the projection $t_{(p+1)/2}\mapsto 0$. By Theorem~\ref{theo01} the isotropy action is equivariantly formal.
\item The space $\E_6/\pp\Sp(4)$. Since the cohomology of $\E_6$  is generated in degrees $3, 9, 11, 15, 17, 23$ and the one of the denominator group in degrees $3,7,11,15$ we derive that for degree reasons the generators of $\E_6$ in degrees $9$ and $17$ map to zero under the differential of the standard model. Theorem~\ref{theo01} yields equivariant formality, since also the corank of the two groups is $2$.
\item The space $\E_6/\F_4$. Since the cohomology of $\E_6$  is generated in degrees $3, 9, 11, 15, 17, 23$ an the one of $\F_4$ in degrees $3,11,15,23$ we see as above that the cohomology algebra of $\E_6/\F_4$ is a free algebra generated in degrees $9$ and $17$. Indeed, for degree reasons the differentials of the generators in these degrees vanish. Moreover, the space can only have finite dimensional cohomology if the other generators form a contractible algebra---we use reasoning analogous to that employed in Section~\ref{subsec05}. Due to Corollary~\ref{cor02} the isotropy action is equivariantly formal.
\end{itemize}




\def\cprime{$'$}

\pagebreak \

\vfill

\begin{center}
\noindent
\begin{minipage}{\linewidth}
\small \noindent \textsc
{Manuel Amann} \\
\textsc{Institut f\"ur Mathematik}\\
\textsc{Differentialgeometrie}\\
\textsc{Universit\"at Augsburg}\\
\textsc{Universit\"atsstra\ss{}e 14 }\\
\textsc{86159 Augsburg}\\
\textsc{Germany}\\
[1ex]
\textsf{manuel.amann@math.uni-augsburg.de}\\
\textsf{www.uni-augsburg.de/de/fakultaet/mntf/math/prof/dif\/f/team/dr-habil-manuel-amann/}
\end{minipage}

\hfill\\
\hfill\\

\noindent
\begin{minipage}{\linewidth}
\small \noindent \textsc
{Andreas Kollross} \\
\textsc{Universit\"at Stuttgart}\\
\textsc{Institut f\"ur Geometrie und Topologie}\\
\textsc{Lehrstuhl f\"ur Geometrie}\\
\textsc{Pfaffenwaldring 57 }\\
\textsc{70569 Stuttgart}\\
\textsc{Germany}\\
[1ex]
\textsf{andreas.kollross@mathematik.uni-stuttgart.de}\\
\textsf{https://www.igt.uni-stuttgart.de/team/Kollross/}
\end{minipage}
\end{center}

\end{document}